\newcommand{\cG}{\mathcal{G}}
\newcommand{\cO}{\mathcal{O}}
\newcommand{\cM}{\mathcal{M}}
\newcommand{\cP}{\mathcal{P}}
\newcommand{\brho}{\boldsymbol{\rho}}
\newcommand{\bitm}{\begin{itemize}}
\newcommand{\eitm}{\end{itemize}}
\newcommand{\bitme}{\begin{enumerate}[label=(\roman*),leftmargin=0.4in]}
\newcommand{\eitme}{\end{enumerate}}
\newcommand{\beq}{\begin{equation}}
\newcommand{\eeq}{\end{equation}}
\def\bals#1\eals{\begin{align*} #1 \end{align*}}
\def\bal#1\eal{\begin{align} #1 \end{align}}
\newcommand{\Iif}{\quad \text{ if } }
\newcommand{\Aand}{\quad \text{ and } \quad }
\newcommand\Dom\Omega
\newcommand\CC{\mathbb{C}}
\newcommand\PP{\mathbb{P}}
\newcommand\RR{\mathbb{R}}
\newcommand\TT{\mathbb{T}}
\newcommand\NN{\mathbb{N}}
\newcommand\UU{\mathbb{U}}
\newcommand\VV{\mathbb{V}}
\newcommand\WW{\mathbb{W}}
\newcommand\Lap\Delta
\newcommand\abs[1]{\left\lvert #1 \right\rvert}
\newcommand\dx{\,\mathrm{d}x}
\newcommand\dtau{\,\mathrm{d}\tau}
\def\bpde#1\epde{\[\left\{\begin{aligned}#1\end{aligned}\right. \]}
\def\inbpde#1\inepde{\left\{\begin{aligned}#1\end{aligned}\right.}
\def\binpde#1\einpde{\left\{\begin{aligned}#1\end{aligned}\right.}
\newcommand\Normlr[2]{\left\lVert { #1 } \right\rVert_{#2}}
\newcommand\Norm[2]{\lVert { #1 } \rVert_{#2}}
\def\bw{\mathbf{w}}
\def\cB{\mathcal{B}}
\def\cD{\mathcal{D}}
\def\cW{\mathcal{W}}
\def\half{\frac{1}{2}}
\def\bw{\mathbf{w}}
\def\p{\partial}
\def\bfB{\mathbf{B}}
\def\bfC{\mathbf{C}}
\def\bfN{\mathbf{N}}
\def\bfM{\mathbf{M}}
\def\bfP{\mathbf{P}}
\def\bfP{\mathbf{P}}
\def\bfU{\mathbf{U}}
\def\bfV{\mathbf{V}}
\def\bfW{\mathbf{W}}
\def\bfU{\mathbf{U}}
\def\bfV{\mathbf{V}}
\def\bfGamma{\mathbf{\Gamma}}
\def\bfTheta{\mathbf{\Theta}}
\def\b0{\mathbf{0}}
\def\bb{\mathbf{b}}
\def\bc{\mathbf{c}}
\def\bfXi{\boldsymbol{\Xi}}
\def\bmu{\boldsymbol{\mu}}
\def\eps{\varepsilon}
\def\bbmat{\begin{bmatrix}[r]}
\def\ebmat{\end{bmatrix}}
\newcommand{\barr}{\begin{array}}
\newcommand{\ea}{\end{array}}
\newcommand{\bea}{\begin{eqnarray}}
\newcommand{\eea}{\end{eqnarray}}
\newcommand{\bt}{\begin{table}}
\newcommand{\et}{\end{table}}
\DeclareMathOperator\Id{Id}
\DeclareMathOperator\supp{supp}
\theoremstyle{plain}
\numberwithin{equation}{section}
\newcommand\tturl[1]{{\tt \scriptsize [\url{{#1}}]}}
\newcommand{\bfb}{\boldsymbol b}
\newcommand{\bfxi}{\boldsymbol \xi}
\newcommand{\bfz}{\boldsymbol z}
\newcommand{\bfgamma}{\boldsymbol \gamma}
\newcommand{\Mcal}{\mathcal{M}}
\def\bfXi{\boldsymbol \Xi}
\newcommand{\bfy}{\boldsymbol y}
\def\tfin{t_{\textrm{F}}}
\newcommand\cMf{\mathcal{M}^{(\mathrm{f})}}
\newcommand\cMr{\mathcal{M}^{(\mathrm{r})}}
\newcommand\cMfl{\cMf_S}
\newcommand\uf{u_{N_\delta}^{(\textrm{f})}}
\newcommand\ur{u^{(\textrm{r})}_M}
\newcommand\cMfs{\underline{\mathcal{M}}^{(\mathrm{f})}}
\newcommand\cMrs{\underline{\mathcal{M}}^{(\mathrm{r})}}
\newcommand\ffsn{\underline{f}_{N_\delta}}
\newcommand\ufsn{\underline{u}_{N_\delta}}
\newcommand\frsM{\underline{f}^{(r)}_{M}}
\newcommand\urs{\underline{u}^{(r)}}
\newcommand\uos{\underline{u}_{0}}
\newcommand\uosl[1]{\underline{u}_{0,#1}^{(r)}}
\newcommand\ursM{\underline{u}^{(r)}_{M}}
\newcommand\cNs{\underline{\mathcal{N}}}
\newcommand\cNrs{\underline{\mathcal{N}}^{(r)}}
\newcommand\cNd{\overline{\mathcal{N}}}
\newcommand\cNrd{\overline{\mathcal{N}}^{(r)}}
\newcommand\cNdS{\overline{\mathcal{N}}_S}
\newcommand\cMfd{\overline{\mathcal{M}}^{(\mathrm{f})}}
\newcommand\cMrd{\overline{\mathcal{M}}^{(\mathrm{r})}}
\newcommand\ffdP{\overline{f}_{\bfP}}
\newcommand\ffdPi{\overline{f}_{\bfP}^{(i)}}
\newcommand\ufdP{\overline{u}_{\bfP}}
\newcommand\frdXi{\overline{f}^{(r)}_{\bfXi}}
\newcommand\urd{\overline{u}^{(r)}}
\newcommand\urdXi{\overline{u}^{(r)}_{\bfXi}}
\newcommand\Linv{L_{\textrm{inv}}}
\newcommand\NormLzdl[1]{\Normlr{#1}{L^2(\Dom_\ell)}}
\newcommand{\Tlzs}{\underline{T}_{12}}
\newcommand{\Tls}{\underline{T}_{1}}
\newcommand\cNrmats{\cNrd_{\textrm{MATS}}}
\newcommand\cNmats{\cNd_{\textrm{MATS}}}
\definecolor{darkgreen}{rgb}{0.01, 0.75, 0.24}%
\newcommand{\TheTitle}{\textsf{{\small Depth separation for reduced deep networks in nonlinear model reduction: Distilling shock waves in nonlinear hyperbolic
problems}}}
\begin{document}

\ifpdf
\DeclareGraphicsExtensions{.pdf, .jpg, .tif}
\else
\DeclareGraphicsExtensions{.eps, .jpg}
\fi

\title{\TheTitle}

\author{Donsub Rim%
  \thanks{Courant Institute, %
  New York University, New York, NY 10012 %
  (\email{{\tt dr1653@nyu.edu}},
   \email{{\tt venturi@cims.nyu.edu}},
   \email{{\tt bruna@cims.nyu.edu}},
   \email{{\tt pehersto@cims.nyu.edu}},
  )}%
 \and
    Luca Venturi\footnotemark[1]%
 \and
    Joan Bruna\footnotemark[1]%
 \and
    Benjamin Peherstorfer\footnotemark[1]%
}
\maketitle

\begin{abstract}
Classical reduced models are low-rank approximations using a fixed basis designed to achieve dimensionality reduction of large-scale systems.
In this work, we introduce reduced deep networks, a generalization of classical reduced models formulated as deep neural networks. We prove depth separation results showing that reduced deep networks  approximate solutions of parametrized hyperbolic partial differential equations with approximation error $\eps \in (0,1)$ with $\cO(|\log(\eps)|)$ degrees of freedom, even in the nonlinear setting where solutions exhibit shock waves. We also show that classical reduced models achieve exponentially worse approximation rates by establishing lower bounds on the relevant Kolmogorov $N$-widths.
\end{abstract}

\begin{keywords}
Deep neural networks, model reduction, depth separation, Kolmogorov $N$-width
\end{keywords}

\begin{AMS}
68T07,65M22,41A46
\end{AMS}

\section{Introduction}

We propose \emph{reduced deep networks} (RDNs), which are deep neural network (DNN) constructions that generalize  classical reduced models \cite{crb-book,siamrev-survey}. We show that RDNs achieve exponentially faster error decay with respect to number of degrees of freedom when approximating solution manifolds of certain nonlinear hyperbolic partial differential equations (PDEs) in contrast to classical reduced models. 
Our arguments yield lower bounds on the smallest number of degrees of freedom necessary to achieve a given accuracy with classical reduced models, by estimating the Kolmogorov $N$-width \cite{pinkus12,crb-book}. The lower bounds apply in general to a function class we call \emph{sharply convective} and advances the existing results \cite{Ohlberger16,Greif19,welper17} beyond constant-speed problems. The two results indicate a type of depth separation: RDNs can achieve dimensionality reduction where shallow approximations such as classical reduced models cannot. The results are shown for representative hyperbolic problems, the color equation (variable-speed transport) and the Burgers' equation in a single spatial dimension.

Classical reduced models fail to be efficient not only for hyperbolic problems but for transport-dominated problems in general \cite{rowley00,Ohlberger13}. Nonlinear model reduction techniques are developed to overcome the limitations. These include the removal of symmetry \cite{rowley00}, dynamical low-rank (DLR) approximations or dynamically orthogonal (DO) method \cite{koch07,sapsis09,musharbash20}, method of freezing \cite{Ohlberger13}, approximated Lax-Pairs \cite{Gerbeau14}, reduction of optimal transport maps \cite{iollo14}, calibrated manifolds \cite{Cagniart2019,nonino19}, shock curve estimation \cite{taddei14}, adaptive online low-rank updates \cite{pehersto15,P18AADEIM}, adaptive $h$-refinement \cite{carlberg15}, shifted proper orthogonal decomposition (sPOD) \cite{schulze18}, Lagrangian basis method \cite{Mojgani17}, transport reversal \cite{rim17reversal}, transformed snapshot interpolation \cite{welper17,welper19}, generalized Lax-Philips representation \cite{rim18mr,radonsplit} deep autoencoders \cite{Lee20}, characteristic dynamic mode decomposition \cite{sesterhenn2019}, registration methods \cite{taddei19}, Wasserstein barycenters \cite{ehrlacher19}, unsupervised traveling wave identification with shifting truncation \cite{mendible19},  a generalization of the moving finite element method (MFEM) \cite{black19}, and Manifold Approximations via Transported Subspaces (MATS) \cite{Rim19}.  A common feature among these new methods is the dynamic adaptation of the low-rank representation. The adaptation is achieved using low-rank updates, adaptive refinements, or nonlinear transformations. 

The works \cite{welper19,Lee20} make use of DNNs. There also has been efforts to approximate the solution manifold of parametric PDEs directly with DNNs \cite{Kutyniok19,Regazzoni19,Laakmann20,Geist20}, by exploiting the expressive power of DNNs for approximating solutions of PDEs and nonlinear functions in general \cite{Cybenko89,Telgarsky16,Yarotsky17,Raissi19,Daubechies19,Schwab19}. DNNs also have been used to compute the reduced coefficients \cite{Wang19}. The key challenge in these approaches is in achieving the level of computational efficiency desired in model reduction, as these DNN constructions are more computationally expensive to evaluate or manipulate than the classical reduced models.

MATS is a nonlinear reduced solution that is written as a composition of two low-rank representations, which allows efficient computations. The efficiency is equivalent to that of classical reduced models and thus enables it to be used directly with the governing differential equations and achieve significant speed-ups \cite{Rim19}. MATS was motivated by the distinguishing feature of hyperbolic PDEs, namely that the solution propagates along characteristic curves \cite{evans10,fvmbook}. However, there are limitations in its applicability, as the numerical experiments in \cite{Rim19} indicate that the efficiency of MATS depends on the regularity of the characteristic curves. 

The RDN introduced here is a generalization of MATS with additional hidden layers, where each layer has a low-rank representation. We will show that RDNs yield efficient approximations of singular characteristic curves by using additional hidden layers with regular representations. Thus, RDNs can approximate solution manifolds of nonlinear hyperbolic PDEs, even when nonlinear shocks are present.

The RDN is reminiscent of the compression framework for deep networks that is being studied theoretically for improving generalization bounds \cite{Neyshabur17,Arora18}, or being utilized in practice to accelerate the performance of large networks in practical applications \cite{chen15,novikov15,Cheng18}. However, the fact that an RDN is a set of networks with a specifically designed degree of freedom, rather than a single network exhibiting low-rank structure in its weights, distinguishes it from the compression frameworks. Furthermore, the specific architecture we use includes special components, such as layers that compute the inverse of a function, not very common in generic architectures used in machine learning.

RDNs are different from deep network approximations that have sparse connections \cite{Bolcskei19,LeCun89}. An RDN can be viewed as a dense network with a large number of activations, albeit with very few number of effective parameters. But beyond the differences in the architecture, the RDNs are constructed to maintain important properties that are indispensible in model reduction. While sparse approximations lead to efficient approximations of general function classes \cite{temlyakov08}, such approximations are difficult to deploy in model reduction applications. For example, the choice of $N$ best terms is not necessarily regular with respect to the target of approximation, whereas the success of the reduced system rely crucially on such regularity.

In the machine learning literature, \emph{distillation} or \emph{model compression} refers to the transferring of the learned knowledge from an accurate model to another specialized model that is more efficient for deployment \cite{Bucilua06,Hinton15D}. Model reduction is driven by an identical motivation.

\clearpage

\section{Reduced deep networks}

In this section, we introduce RDNs and the notion of deep reduction. We first provide a brief overview of model reduction for computing reduced solutions and then show that reduced solutions can be represented as shallow networks. We then derive a deep-network representations of reduced solutions, resulting in RDNs.

\subsection{Model reduction} \label{sec:model-reduction}

We give a brief overview of model reduction. For a comprehensive review, we refer the reader to the references \cite{siamrev-survey,crb-book}.

Our goal is in approximating solutions of PDEs. The specific PDEs will be defined later. For now, it is only important that the solution functions $u$ depend on the spatial variable $x$, time $t$, and parameters $\bmu$. Let us denote by $\cM$ the \emph{solution manifold}, 
\beq
    \cM := \{u(\cdot,t;\bmu) \in \VV: \Dom \to \RR 
    \,|\, t \in [0,\tfin], \bmu \in \cD \}\,,
    \label{eq:manifold}
\eeq
which is a set of functions in a real Hilbert space $\VV := L^2(\Dom)$ over the spatial domain $\Dom:=(0,1)$. The parameter domain is $\cD \subset \RR^P$ ($P \in \NN$) and the time interval is $[0,\tfin]$, where $\NN$ denotes the set of natural numbers. 

A \emph{full solution} (or a \emph{full-model solution}) is an approximation of a solution $u \in \cM$ in a finite-dimensional subspace spanned by $N_{\delta} \in \mathbb{N}$ basis functions $\{\varphi_n\}_{n=1}^{N_\delta} \subset \VV$,
\beq
    \uf(x; t, \bmu) = \sum_{n=1}^{N_\delta} w_n(t, \bmu) \varphi_n(x),
    \label{eq:full}
\eeq
with coefficients $\{w_n(t, \bmu)\}_{n = 1}^{N_{\delta}}$ that depend on time and parameter. For ease of exposition, we consider in the following full solutions that are piecewise linear in the spatial variable $x$ on an equidistant grid with $N_{\delta}$ grid points and $\{\varphi_n\}_{n = 1}^{N_{\delta}}$ being the canonical nodal point basis \cite{Strang73}. Then, for all $\delta \in (0,1)$, there is $N_\delta$ large enough so that for each $(t,\bmu) \in [0,\tfin] \times \cD$ the full solution $\uf(\cdot;t,\bmu)$ of the form \cref{eq:full} approximates the solution $u(\cdot;t,\bmu) \in \cM$ with
\beq
    \Normlr{u(\cdot;t,\bmu) - \uf(\cdot;t,\bmu)}{\VV} < \delta\,.
    \label{eq:full-accuracy}
\eeq
For a fixed $N_{\delta}$, the \emph{approximate solution manifold} is
\beq
\cMf
:=
\left\{
    \uf(\cdot;t,\bmu): (t,\bmu) \in [0,\tfin] \times \cD 
\right\}.
\label{eq:full-manifold}
\eeq
Full solutions typically are computed with finite-difference, finite-element or finite-volume methods, which can be computationally expensive if a large $N_{\delta}$ is required to achieve the desired tolerance $\delta$. 
Model reduction aims to construct reduced solutions in problem-dependent subspaces of much lower dimension $M \ll N_{\delta}$ to reduce computational costs \cite{siamrev-survey,crb-book}. 
Model reduction consists of an \emph{offline stage} and an \emph{online stage}. During the offline stage, the basis of the low-dimensional subspace, the reduced space $\VV_{M}$, is constructed. 
A reduced basis is typically computed by collecting a finite subset $\cMfl = \{\uf(\cdot;t_i,\bmu_i) \}_{i=1}^S \subset \cMf$ of full solutions, where  $S \in \NN$ and $\{(t_i,\bmu_i)\}_{i=1}^S \subset [0,\tfin]\times \cD$, and then computing a low-dimensional basis using, e.g., the singular value decomposition (SVD) \cite{golub96}.
Let $\{\xi_m\}_{m=1}^{M} \subset \VV$ be the set of the reduced-basis functions.

In the online phase, a \emph{reduced solution} (or a \emph{reduced-model solution}) is derived in the space spanned by the reduced basis, 
\beq
    \ur (x;t,\bmu)
    :=
    \sum_{m=1}^{M} \gamma_m(t,\bmu) \xi_m(x).
    \label{eq:reduced}
\eeq
The coefficients $\{\gamma_m(t,\bmu)\}_{m=1}^M$ of the reduced solutions are obtained by solving a system of equations for any given $(t, \bmu) \in [0, \tfin] \times \cD$. 
The reduced system is derived using the PDE. The computational complexity of solving the reduced system scales with the dimension of the reduced space $M$ and is independent of the dimension of the full solutions $N_\delta$. If the dimension $M$ of the reduced space is small compared to the dimension $N_\delta$ of the full solutions, then solving for the reduced solution can be computationally cheaper than solving for the full solution. At the same time, the dimension $M$ of the reduced space needs to be chosen sufficiently large so that the reduced solution are sufficiently accurate.  

Analogously to \cref{eq:full}, we assume in the following that for all $\eps \in (0,1)$, there exists $M \in \NN$ such that for each $(t,\bmu) \in [0,\tfin] \times \cD$, the solution $u(\cdot;t,\bmu) \in \cM$ can be approximated with a reduced solution $\ur(\cdot;t,\bmu)$ of the form \cref{eq:reduced} satisfying
\beq
    \Normlr{u(\cdot;t,\bmu) - \ur(\cdot;t,\bmu)}{\VV} < \eps.
    \label{eq:reduced-accuracy}
\eeq
Note that in the model reduction literature, the error \cref{eq:reduced-accuracy} is typically obtained with respect to the full solution $\uf$, rather than the (exact) solution $u$. 
For a fixed reduced basis $\{\xi_m\}_{m = 1}^M$ with $M$ basis functions, we call the set of reduced solutions $\cMr$ that satisfies \cref{eq:reduced-accuracy} the \emph{reduced solution manifold},
\beq
    \cMr
    :=
    \left\{
        \ur(\cdot;t,\bmu): (t,\bmu) \in [0,\tfin] \times \cD 
    \right\}.
    \label{eq:reduced-manifold}
\eeq

\subsection{Deep neural networks (DNNs)}

We will define deep feed-forward neural networks . We define the set $\PP$ to contain two possible choices of activation functions in our networks. Let $\PP := \{ \sigma(x), \varsigma(x)\}$, where $\sigma(x) := \max\{0,x\}$ is the rectified linear unit (ReLU) and $\varsigma(x) := \sigma'(x)$ is the threshold function. The input variable $x$ is in $\Dom = [0,1]$ unless specified otherwise, and the output in $\RR$. Note that the inclusion of threshold functions $\varsigma$ in $\PP$ is not strictly necessary, but simplifies the exposition. On the other hand, other activations yielding universal approximations can be used without affecting the results in this work (see, e.g. \cite{Eldan16}).

We denote by $\odot$ the entry-wise composition: Given a vector of functions $\bfxi := [\xi_1, ... , \xi_N]^T$, $\xi_1, ... , \xi_N: \RR \to \RR$ and a real vector $\bfy := [y_1, ... , y_N]^T \in \RR^{N\times 1}$, the entrywise composition is given by $\bfxi \odot \bfy = [\xi_1(y_1), ... , \xi_N(y_N)]^T.$

For specified total number of layers $L \in \NN$ and the widths $\bfN = [N_1, ... , N_{L+1}] \in \NN^{L+1}$, we denote the weights, biases, and activations 
\beq
    \left\{
    \begin{aligned}
    \bfW_\ell &\in \RR^{N_{\ell+1} \times N_{\ell}}, \ell = 1, ... , L, \\
    \bfb_\ell &\in \RR^{N_{\ell+1} \times 1},
    \ell = 1, ... , L,\\
    \brho_{\ell} &\in \PP^{N_{\ell+1}},
    \ell = 1, ... , L-1,\\
    \end{aligned}
    \right.
    \quad
    \left\{
    \begin{aligned}
    \bfW &:= (\bfW_1, ... , \bfW_L),\\
    \bfB &:= (\bfb_1, ... , \bfb_L), \\
    \bfP &:= (\brho_1, ... , \brho_{L-1}).
    \end{aligned}
    \right.
    \label{eq:wgts}
\eeq
We define the corresponding set of weights, biases and activations
\beq
\left\{
\begin{aligned}
\cW(\bfN)
&=
\RR^{N_2 \times N_1}
\times \cdots \times
\RR^{N_{L+1} \times N_{L}},
\\
\cB(\bfN)
&=
\RR^{N_2 \times 1}
\times \cdots \times
\RR^{N_{L+1} \times 1},
\\
\cP(\bfN)
&=
\PP^{N_2 \times 1}
\times \cdots \times
\PP^{N_L \times 1}.
\end{aligned}
\right.
\eeq
Let us define the affine maps $A_\ell$ for $\ell = 1,2, ... , L$,
\beq
    A_\ell(\bfz) = \bfW_\ell \bfz  + \bb_\ell,
    \quad
    \bfW_\ell \in \RR^{N_{\ell+1} \times N_{\ell}},
    \quad
    \bb_\ell \in \RR^{N_\ell}.
\eeq
Entries of $\bfW_\ell$ and those of $\bfb_\ell$ are called \emph{weights} and \emph{biases}, respectively. A deep network is formed by the alternating compositions of these affine functions with activations in $\PP$.

A \emph{deep neural network (DNN)} or a \emph{deep network} with $L$ layers $\ffdP: \Dom \to \RR$ is given by
\beq
    \ffdP (x) 
    =  
    A_L \circ \brho_{L-1} \odot A_{L-1} \circ 
    ... 
    \circ \brho_2 \odot A_2 \circ \brho_1 \odot A_1,
    \label{eq:DNN}
\eeq
where $\bfW \in \cW(\bfN), \bfB \in \cB(\bfN), \bfP \in \cP(\bfN)$ for some $\bfN \in \NN^{L+1}$. We denote the class of such networks by $\cNd$, 
\beq
    \cNd 
    :=
    \left\{
        \ffdP 
        \,|\,
        \ffdP \text{ of the form \cref{eq:DNN} with }
        \bfN \in \NN^{L+1}, L \in \NN
    \right\}.
    \label{eq:cNd}
\eeq
A full deep network solution and the corresponding solution manifold is defined analogously to the full solution and the approximate solution manifold defined in \cref{sec:model-reduction}.

\begin{definition}[Full deep network solution] \label{def:M} ~
\bitme
\item Given an error threshold $\delta \in (0,1)$, if for each $u(\cdot;t,\bmu) \in \cM$ corresponding to $(t,\bmu) \in [0,\tfin] \times \cD$ there exists $\ufdP(\cdot;t,\bmu) \in \cNd$ that
\begin{itemize}
\item has dimensions $\bfN_\delta \in \NN^{L_\delta + 1} (L_\delta \in \NN)$ and the choice of activations $\bfP \in \cP(\bfN_\delta)$, both independent of $(t,\bmu)$,
\item weights $\bfW(t,\bmu) \in \cW(\bfN_\delta)$, biases $\bfB(t,\bmu) \in \cB(\bfN_\delta)$,
\item satisfies the estimate
\beq
    \Normlr{ u(\cdot ;t,\bmu) - \ufdP(\cdot;t,\bmu) }{\VV} < \delta,
    \label{eq:full-deep-error}
\eeq
\end{itemize}
then we call $\ufdP$ a \emph{full deep network solution}. 
\item We denote the \emph{full deep network solution manifold} by
\beq
\cMfd 
:= 
\left\{ \ufdP(\cdot;t,\bmu) \in \cNd 
    \,|\, 
    (t,\bmu) \in [0,\tfin] \times \cD
\right\},
\label{eq:full-deep-network-manifold}
\eeq
and say that $\cMfd$ has the dimensions $\bfN_\delta$.
\eitme
\end{definition}

\subsection{Reduced deep networks and deep reduction}

We now introduce RDNs, a deep network generalization of classical reduced models. They are derived by writing down the low-rank approximation to the weight matrices in DNNs. 

Suppose we are given a finite sample $\cNdS$ of deep networks in $\cNd$ with identical dimensions $\bfN \in \NN^{L_0 + 1}$ ($L_0 \in \NN$) and activations $\bfP \in \cP(\bfN)$. That is, 
\beq
     \cNdS
    :=
    \left\{
    \ffdPi(x) \in \cNd
    \,|\,
    i = 1, ..., S
    \right\}.
\eeq
Then let us denote the weights and biases of the $\ell$-th layer of $\ffdPi \in \cNdS$ by $\bfW_{\ell i}, \bb_{\ell i}$ for $i = 1, ... , S$ and $\ell = 1, ... , L_0$. Then we may write
\beq
    \left\{
    \begin{aligned}
    \bfW_{\ell i} 
    &= 
    \bfU_\ell \bfGamma_{\ell i} \bfV_\ell^T,
    \\
    \bb_{\ell i}
    &=
    \bfU_\ell \bc_{\ell i},
    \end{aligned}
    \right.
    \quad
    \bfU_\ell \in \RR^{N_{\ell} \times N_{\ell}}, 
    \bfV_\ell \in \RR^{N_{\ell-1} \times N_{\ell-1}},
    \bfGamma_{\ell i} \in \RR^{N_\ell \times N_{\ell-1}},
\eeq
in which $\bfU_\ell$ and $\bfV_\ell$ contain orthogonal columns.

Now, suppose that there are low-rank approximations $\tilde{\bfW}_{\ell i}$ and $\tilde{\bb}_{\ell i}$ of the form
\beq
    \tilde{\bfW}_{\ell i}
    = 
    \tilde{\bfU}_\ell 
    \tilde{\bfGamma}_{\ell i} 
    \tilde{\bfV}_\ell^T,
    \quad
    \tilde{\bb}_{\ell i} 
    = 
    \tilde{\bfU}_\ell \tilde{\bc}_{\ell i}
\eeq
in which $\tilde{\bfU}_\ell \in \RR^{N_{\ell-1} \times M_{\ell-1}}$,  $\tilde{\bfV}_\ell \in \RR^{N_\ell \times M_\ell}$, $\tilde{\bfGamma}_{\ell i} \in \RR^{M_{\ell} \times M_{\ell-1}}$,
$\tilde{\bc}_{\ell i} \in \RR^{M_\ell \times 1}$ with $M_\ell \ll N_\ell$, the columns of $\tilde{\bfU}_\ell,\tilde{\bfV}_\ell$ are columns of $\bfU_\ell,\bfV_\ell$, and $\Norm{\bfW_{\ell i} - \tilde{\bfW}_{\ell i}}{2}$ and $\Norm{\bb_{\ell i} - \tilde{\bb}_{\ell i}}{2}$ are sufficiently small. Then $A_\ell$ has a truncated version $\tilde{A}_\ell$ given by $\tilde{A}_\ell (\bfz) := \tilde{\bfW}_\ell \bfz + \tilde{\bb}_\ell.$ Projecting the input to the column space of $\tilde{\bfV}_\ell$, we obtain the reduced affine maps
\beq
    B_\ell (\bfy) := \bfGamma_\ell \bfy + \bc_\ell,
    \quad
    \bfGamma_\ell \in \RR^{M_\ell \times M_{\ell-1}},
    \bc_\ell \in \RR^{M_\ell \times 1}.
\eeq
By including a dummy input in $\bfy$ for every $B_\ell$, we may drop the bias $\tilde{\bc}_\ell$. Hence, we let without loss of generality 
\beq
    B_\ell (\bfy) := \bfGamma_\ell \bfy,
    \quad
    \bfGamma_\ell \in \RR^{M_\ell \times M_{\ell-1}}.
    \label{eq:B_ell}
\eeq
Let us define the \emph{reduced activations} $\bfxi_\ell: \RR^{M_\ell} \to \RR^{M_{\ell}}$
\beq
    \bfxi_\ell(\bfy) 
    := 
    \tilde{\bfU}_{\ell+1}^T \brho_\ell
    \odot 
    (\tilde{\bfV}_\ell \bfy),
    \quad
    (\brho_\ell \in \PP^{N_\ell}).
    \label{eq:red-activ}
\eeq
Collecting all the weights and reduced activations, let
\beq
\bfGamma := (\bfGamma_1, ... , \bfGamma_{L_0}),
\quad
\bfXi := (\bfxi_1, ... , \bfxi_{L_0-1}),
\eeq
and define the space of weights given by $\bfM = [M_1, ... , M_{L_0+1}] \in \RR^{M_{L_0+1}}$
\beq
\cG(\bfM)
=
\RR^{M_{2}\times M_1}
\times \cdots \times
\RR^{M_{L_0+1}\times M_{L_0}}.
\eeq

\begin{definition}[Reduced deep network] \label{def:RDN}
Given $\bfM \in \NN^{L+1}$,
\emph{reduced weights} $\bfGamma \in \cG(\bfM)$, and \emph{reduced activations} $\bfXi = (\bfxi_1, ... , \bfxi_{L-1})$ of the form \cref{eq:red-activ}, and $B_\ell$ of the form \cref{eq:B_ell}, we define a \emph{reduced deep network (RDN)} $\frdXi: \Dom \to \RR$ as given by
\beq
    \frdXi (x) 
    :=  
    B_L \circ \bfxi_{L-1} \odot B_{L-1} \circ ... \circ \bfxi_{2} \odot
B_{2} \circ \bfxi_{1} \odot B_1(x).
    \label{eq:RDN}
\eeq
\end{definition}

We will denote the class of reduced deep networks by, 
\beq
    \cNrd
    :=
    \left\{
        \frdXi 
        \,|\,
        \frdXi \text{ of the form \cref{eq:RDN} with }
        \bfM \in \NN^{L+1}, L \in \NN
    \right\}.
\eeq
We call the procedure of obtaining RDNs $\cNrd$ from a subset $\cNdS$ of $\cNd$ discussed above \emph{deep reduction}. The RDN is determined by the reduced activations $\boldsymbol{\Xi}$ the reduced weights $\bfGamma$, and the total number of degrees of freedom in the weight parameters is small, equal to $\sum_{\ell=1}^{L} M_\ell M_{\ell+1}$ minus the number of shared weights or biases.

The primary utility of RDN from the model reduction point of view is in finding $\urd \in \cNrd$ with small degrees of freedom such that, for each $u \in \cM$ it satisfies $\Norm{u - \urd}{\VV} < \eps$. 

\begin{definition}[Reduced deep network solution] \label{def:Mr} ~
\bitme
\item Given an error threshold $\eps \in (0,1)$, if for each $u(\cdot;t,\bmu) \in \cM$ corresponding to $(t,\bmu) \in [0,\tfin] \times \cD$ there exists $\urdXi \in \cNrd$ that
\begin{itemize}
\item has dimensions $\bfM \in \NN^{L+1}$ $(L \in \NN)$ and reduced activations $\bfXi = (\bfxi_1, ... , \bfxi_{L-1})$ of the form \cref{eq:red-activ} both independent of $(t,\bmu)$
\item has reduced weights $\bfGamma(t,\bmu) \in \cG(\bfM)$
\item satisfies the estimate
\beq
    \Normlr{u(\cdot,t;\bmu) - \urdXi(\cdot,t;\bmu)}{\VV} < \eps,
    \label{eq:rdn-accuracy}
\eeq
\end{itemize}
we call $\urdXi$ a \emph{reduced deep network solution}.
\item Denote the \emph{reduced deep network solution manifold} by
\beq
\cMrd := \left\{ \urdXi(\cdot;t,\bmu) \in \cNrd
                \,|\, (t,\bmu) \in [0,\tfin] \times \cD
         \right\}.
\eeq
and say that $\cMrd$ has the dimensions $\bfM$.
\eitme
\end{definition}

\subsection{Example: Full and reduced solutions as 2-layer networks}

As an example, we will show that classical model reduction framework from \cref{sec:model-reduction} can be expressed in terms of neural networks. A \emph{2-layer network} is a member in $\cNd$ (\cref{eq:cNd}) with two layers ($L=2$ in \cref{eq:DNN}). Such a network $f_{\brho}: \Dom \to \RR$ of width $N_{\delta} \in \mathbb{N}$ can be written in the form
\beq
    f_{\brho}(x)
    =
    \sum_{n=1}^{N_\delta} w_{2,n} \rho_n (w_{1,n} x + b_{1,n}) + b_{1,2}
    =
    \bw_2 \brho \odot ( \bw_1 x + \bb_1) + \bb_2,
    \label{eq:shallow-network}
\eeq
where $\bw_1 = [ w_{1,1}, ... , w_{1,N_\delta} ]^T \in \RR^{N_\delta \times 1}$, $\bw_2 = [ w_{2,1}, ... , w_{2,N_\delta}] \in \RR^{1 \times N_\delta}$, $\bb_1 \in \RR^{N_\delta \times 1}$, $\bb_2 \in \RR^{1 \times 1}$, $\brho = [\rho_1, ... , \rho_{N_\delta}]^T \in \PP^{N_\delta \times 1}$, $\bfW = (\bw_1, \bw_2)$, and  $\bfB = (\bb_1, \bb_2)$. 

We defined full solutions \eqref{eq:full} as piecewise linear functions on an equidistant grid with $N_{\delta}$ grid points, which can be represented as a specific 2-layer network whose weights and biases in the hidden layer is fixed. With grid-width $\Delta x := 1/(N_\delta -1)$ and the number of grid-points $N_\delta \in \NN$, set
\beq
    \begin{aligned}
    \underline{\bw}_1 
    &:=
    \frac{1}{\Delta x}\left[  1, ... , 1 \right] 
    =
    \frac{\mathbf{1}_{N_\delta}}{\Delta x},
    &
    \underline{\bb}_1 
    &:=
    [1, 0, -1, -2, ..., -N_\delta],
    \\
    \underline{\brho}
    &:=
     [ \sigma, ... , \sigma]^T,
    &
    \underline{\bb}_2 
    &:=
    \mathbf{0}.
    \end{aligned}
    \label{eq:fbar-wgts}
\eeq

Having fixed these weights and biases, only $\bw_2 = [ w_{2,1}, ... , w_{2,N_\delta}]$ is allowed to vary, so we will simplify the notation by newly denoting the variable weights $\bw_2$ by $\bw = [w_1, ... , w_{N_\delta}]$, and write
\beq
    \ffsn(x) := \bw \underline{\brho} ( \underline{\bw}_1 x + \underline{\bb}_1).
    \label{eq:fbar}
\eeq
We will denote the class of this specific networks given by \cref{eq:fbar,eq:fbar-wgts} 
\beq
    \cNs
    := 
    \left\{ 
    \ffsn (x) 
    \,|\, 
    \ffsn (x) \text{ of the form \cref{eq:fbar}}, N_\delta \in \NN
    \right\} \, .
\eeq

Then $\cNs$ is equivalent to the set of continuous piecewise linear functions on the equidistant grid: Any $\ffsn \in \cNs$ can be written as a special case of a full solution \cref{eq:full},
\beq
    \ffsn(x) = \sum_{n=1}^{N_\delta} w_n \varphi_n(x),
    \quad
    \varphi_n(x) 
    := 
    \frac{1}{\Delta x} \sigma (x - \Delta x (n-2)),
\eeq
and $\{\varphi_n\}_{n=1}^{N_\delta}$ forms a basis of the space of continuous piecewise linear functions on a equidistant grid on $\Dom = [0,1]$ of grid-width $\Delta x = 1/N_\delta$. Since $\cNs$ is dense in $\VV$, its members can serve the role of full solutions \cref{eq:full-accuracy}. Thus we can find the approximate solution manifold using the 2-layer networks in $\cNs$, and denote it by
\beq
\cMfs 
= 
\left\{ 
    \ufsn(\cdot;t,\bmu) \in \cNs 
    \,|\, (t,\bmu) \in [0,\tfin] \times \cD
\right\}.
    \label{eq:full-1layer-manifold}
\eeq
The set $\cMfs$ corresponds to the set of full solutions $\cMf$ \cref{eq:full-manifold} in classical model reduction. 

If the full 2-layer network solutions $\ufsn \in \cMfs$ \cref{eq:full-1layer-manifold} have weights $\bw(t,\bmu)$ that lie in a low-dimensional subspace with dimension $M \ll N_\delta$, then one may write
\beq
    \begin{aligned}
    \bw(t,\bmu) = \bfgamma(t,\bmu) \bfV^T,
    \quad
    \bfgamma(t,\bmu) 
    = 
    [\gamma_{1}(t,\bmu), ... , \gamma_{M}(t,\bmu)] 
    \in \RR^{1 \times M},
    \quad
    \bfV \in \RR^{N_\delta \times M},
    \end{aligned}
\eeq
in which $\bfV$ has orthogonal columns. Then one obtains the reduced representation
\beq
    \bfxi(x)
    =
    [\xi_1(x), ... , \xi_M(x)]^T
    :=
    \bfV^T
    \underline{\brho}
    \odot
    (\underline{\bw}_1 x + \underline{\bb}_1).
    \label{eq:shallow-rb}
\eeq
Each entry of $\bfxi$ is a reduced activation function \cref{eq:red-activ}. This leads to a reduced 2-layer network,
\beq
    \frsM (x)
    :=
    \bfgamma \bfxi(x) = \sum_{m=1}^{M} \gamma_m \xi_m(x).
    \label{eq:fbarr}
\eeq
We shall denote the class of such 2-layer networks 
\beq
    \cNrs
    := 
    \left\{ \frsM 
    \,|\, 
    \frsM, \bfxi \text{ of the form } 
    \cref{eq:fbarr},\cref{eq:shallow-rb},
    M \in \NN
    \right\}.
\eeq
The set of reduced solutions in $\cNrs$ that approximate the solution manifold $\cM$ form the reduced 2-layer network solution manifold,
\beq
\cMrs := \left\{ \ursM(\cdot;t,\bmu) \in \cNrs 
                        \,|\, (t,\bmu) \in [0,\tfin] \times \cD
        \right\}.
        \label{eq:cMrs}
\eeq

Then the reduced activations $\bfxi$ \cref{eq:shallow-rb} correspond to the reduced basis functions \cref{eq:reduced} in classical model reduction, and the reduced 2-layer network solution $\ursM \in \cMrs$ \cref{eq:fbarr} to a reduced solution with $M$ degrees of freedom.

\section{The Kolmogorov $N$-width of sharply convective class}

In this section, we recall the notion of Kolmogorov $N$-width and define the sharply convective class of functions. Then, we will prove a key lemma that establishes a lower bound of the Kolmogorov $N$-width of this class, showing that it decays with an algebraic rate with respect to $N$. This will be used to show the limitations of classical reduced models \cref{eq:reduced} and reduced 2-layer networks \cref{eq:fbarr}.

\subsection{Kolmogorov $N$-width}

Let us begin by defining the Kolmogorov $N$-width. Within this section, we will let $\Dom = [0,1]^d$, $d \in \NN$ since the results apply to dimensions $d > 1$, and recall that we let $\VV = L^2(\Dom)$.

\begin{definition}[\cite{pinkus12}]
\emph{The Kolmogorov $N$-width} of the set of functions $\cM$ is 
\beq
    d(N; \cM) = 
        \inf_{\VV_N} 
        \sup_{u \in \cM} \inf_{v \in \VV_N} 
        \Norm{u - v}{\VV}\,,
    \label{eq:nwidth}
\eeq
where the first infinimum is taken over all $N$-dimensional subspaces $\VV_N$ of $\VV$.
\end{definition}

When the Kolmogorov $N$-width of a solution manifold $\cM$ \cref{eq:manifold} is known, the smallest possible dimension of its reduced manifold $\cMr$ \cref{eq:reduced-manifold} that satisfies the estimate \cref{eq:reduced-accuracy} for given $\eps \in (0,1)$ is also known. This implies that classical reduced models of the form \cref{eq:fbarr} are not efficient for problems whose solution manifolds do not have a fast decaying Kolmogorov $N$-width \cite{crb-book,siamrev-survey}. For example, an exponential decay implies that an efficient classical reduced model exists, whereas an algebraic decay implies the contrary.

\subsection{Sharply convective class}

Here, we describe a key criteria we use to determine if a profile with a sharp gradient is being convected. Then we show that a set of functions satisfying this criteria have the Kolmogorov $N$-width which decays slowly with respect to $N$. 

\begin{figure}
    \centering
    \includegraphics[width=0.75\textwidth]{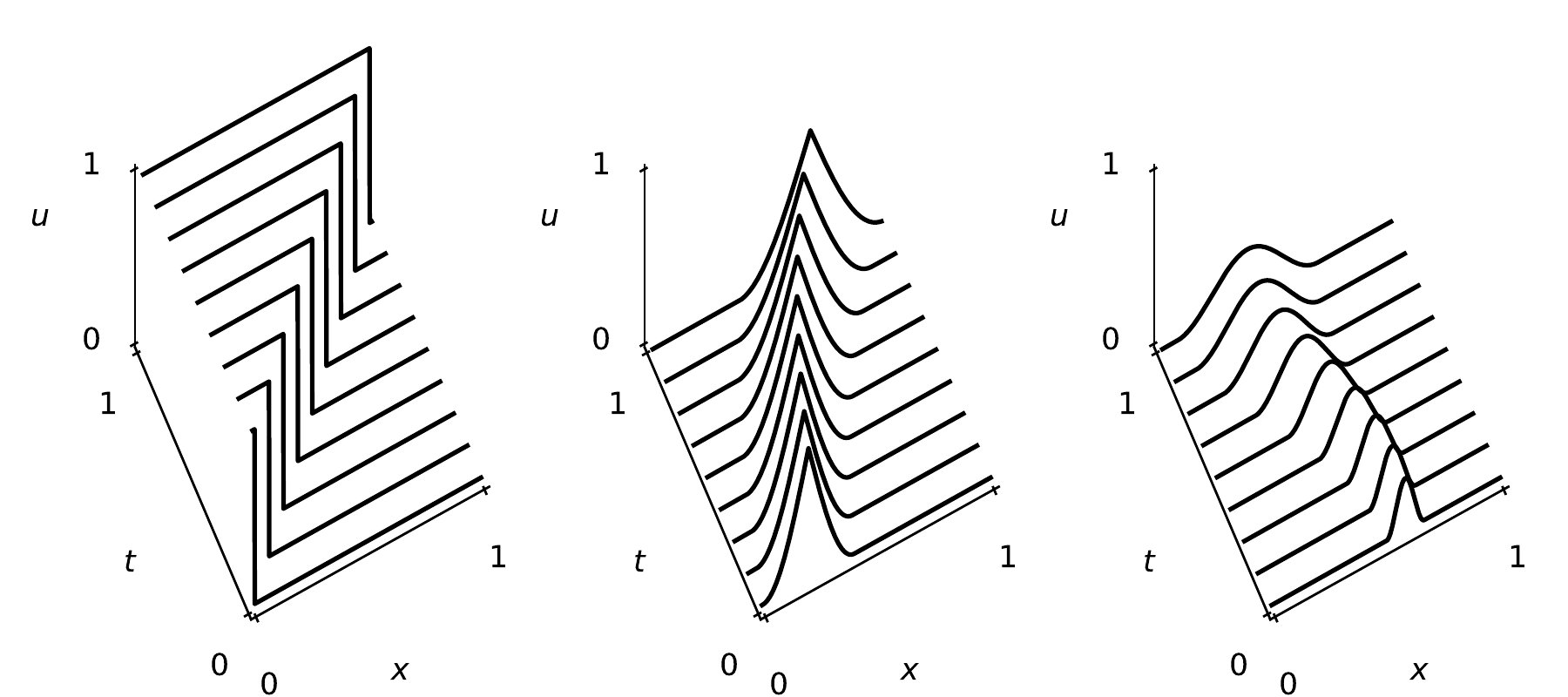}
    \caption{Examples of sharply convective classes. Solution manifolds of time-dependent problems that are $\half$-sharply convective (left),  $\frac{3}{2}$-sharply convective (middle), and $\frac{5}{2}$-sharply convective (right).}
    \label{fig:sharply-convective}
\end{figure}

\begin{definition}
A set $\cM \subset \VV$ is said to generate a $2N$-ball ($N \in \NN$) if there is a set $B_{2N}:=\{\phi_n\}_{n=1}^{2N}$ of linearly independent functions given by the sum 
\beq
    \phi_n
    =
    \sum_{k=1}^\infty a_{nk} u_{nk},
    \quad
    u_{nk} \in \cM, 
    \quad
    a_{nk} \in \RR.
    \label{eq:ball}
\eeq
For such $B_{2N}$ we will associate a real number $A_{N,p}$ given by
\beq
A_{N,p} 
    := 
    \begin{cases}
    \sup_{n} (\sum_{k=1}^\infty |a_{nk}|), &
    \Iif p = 1,\\
    \sup_{n} (\sum_{k=1}^\infty k^p|a_{nk}|^p )^{\frac{1}{p}}, &
    \Iif p \in (1,\infty).\\
    \end{cases}
    \label{eq:ANp}
\eeq
\end{definition}

We use the notation $g \lesssim h$ for real functions $g$ and $h$ to state that $g \le ch$ for some constant $c$ that does not depend on the arguments of $g$ and $h$. We also write $g \sim h$ if $g \lesssim h$ and $h \lesssim g$. We say $B_{2N} = \{\phi_n\}_{n = 1}^{2N}$ is orthogonal if the functions $\phi_1, \dots, \phi_{2N}$ are pairwise orthogonal with respect to the inner product of $\VV$.

\begin{definition}[Sharply convective class] Let $\cM \subset \VV$.
\bitme
\item $\cM$ is said to be $p$-\emph{convective} for $p \in [1,\infty)$ if it generates a $2N$-ball $B_{2N} = \{\phi_n\}_{n=1}^{2N}$, with $A_{N,p} \lesssim 1$ for all $N \in \NN$. 
\item If each $B_{2N}$ ball generated by $\Mcal$ generates an orthogonal $2N$-ball $B'_{2N} = \{\psi_n\}_{n=1}^{2N}$ with $A'_{N,p} \lesssim 1$ for certain $p \in [1,\infty)$ and $\Norm{\psi_n}{\VV} \gtrsim N^{-\alpha}$ for some $\alpha \in \RR_+$, $\cM$ is said to be $(\alpha,p)$-\emph{sharply} convective. If $\cM$ is $(\alpha,p)$-\emph{sharply convective} for all $p \in [1,\infty)$, then it is called $\alpha$-\emph{sharply convective}.
\eitme
\end{definition}

Examples of $\alpha$-sharply convective class of functions are shown in \cref{fig:sharply-convective}.

\begin{lemma}[Kolmogorov $N$-width of convective classes] \label{lem:sharp}
Let $\cM \subset \VV$.
\bitme
\item If $\cM$ is $p$-convective with the associated $2N$-ball $B_{2N}$ then $d(N; \cM) \gtrsim d(N; B_{2N})$.
\item If $\cM$ is $(\alpha,p)$-sharply convective, then $d(N; \cM) \gtrsim N^{-\alpha}$.
\eitme
\end{lemma}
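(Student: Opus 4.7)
The plan is to prove the two bullets in sequence, with the second reducing to the first (applied twice) combined with a direct pigeonhole-style computation of the $N$-width of an orthogonal finite family.

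\textbf{First bullet.} I would exploit the fact that every $\phi_n \in B_{2N}$ is a (possibly infinite) linear combination of elements of $\cM$, so any subspace that approximates $\cM$ well also approximates $B_{2N}$ up to a factor depending only on the coefficients $a_{nk}$. Fix an arbitrary $N$-dimensional subspace $V\subset\VV$ and let $\eps:=\sup_{u\in\cM}\inf_{v\in V}\|u-v\|_\VV$. For each $u_{nk}\in\cM$ appearing in $\phi_n=\sum_k a_{nk} u_{nk}$, select $v_{nk}\in V$ with $\|u_{nk}-v_{nk}\|_\VV\le\eps$ and set $\tilde v_n:=\sum_k a_{nk}v_{nk}\in V$. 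The triangle inequality gives $\|\phi_n-\tilde v_n\|_\VV\le\eps\sum_k|a_{nk}|$. For $p=1$ the coefficient sum is directly $\le A_{N,1}\lesssim 1$. For $p\in(1,\infty)$, the factorization $|a_{nk}|=k^{-1}(k|a_{nk}|)$ combined with H\"older at the conjugate exponent $q=p/(p-1)>1$ gives $\sum_k|a_{nk}|\le\bigl(\sum_k k^{-q}\bigr)^{1/q}A_{N,p}\lesssim 1$. Taking supremum over $n$ and infimum over $V$ produces $d(N;B_{2N})\lesssim d(N;\cM)$.

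\textbf{Second bullet.} Since $(\alpha,p)$-sharp convectivity implies $p$-convectivity, the first bullet already gives $d(N;\cM)\gtrsim d(N;B_{2N})$. Reapplying the same argument with $B_{2N}$ in place of $\cM$ and its generated orthogonal ball $B'_{2N}=\{\psi_n\}_{n=1}^{2N}$ in place of $B_{2N}$ yields $d(N;B_{2N})\gtrsim d(N;B'_{2N})$. It then remains to show $d(N;B'_{2N})\gtrsim N^{-\alpha}$ directly. I will set $W:=\Span\{\psi_1,\dots,\psi_{2N}\}$, which has dimension $2N$ by orthogonality, and use the Pythagorean identity applied to $\psi_n-v=(\psi_n-P_W v)-(I-P_W)v$ for $\psi_n\in W$ to reduce to the case $V\subset W$. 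In the orthonormal basis $\tilde\psi_n:=\psi_n/\|\psi_n\|_\VV$ of $W$, if $\{f_j\}_{j=1}^N$ is an ONB of $V^\perp\cap W$ with coordinates $f_j=\sum_m f_{j,m}\tilde\psi_m$, a direct calculation gives $\|\psi_n-P_V\psi_n\|_\VV^2=\|\psi_n\|_\VV^2\,g_n$ where $g_n:=\sum_j f_{j,n}^2$. Parseval yields $\sum_n g_n=N$, so averaging over the $2N$ indices and using $\|\psi_n\|_\VV^2\gtrsim N^{-2\alpha}$ gives
\begin{equation*}
\max_n\|\psi_n-P_V\psi_n\|_\VV^2 \;\ge\; \frac{1}{2N}\sum_n\|\psi_n\|_\VV^2\,g_n \;\gtrsim\; \frac{N^{-2\alpha}}{2N}\sum_n g_n \;=\; \frac{N^{-2\alpha}}{2}.
\end{equation*}
Taking the infimum over $V$ and chaining the three inequalities produces $d(N;\cM)\gtrsim d(N;B_{2N})\gtrsim d(N;B'_{2N})\gtrsim N^{-\alpha}$.

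\textbf{Main obstacle.} The delicate points will be (i) justifying convergence of the infinite series defining $\tilde v_n=\sum_k a_{nk}v_{nk}$ in $\VV$, which reduces to $\sum_k|a_{nk}|<\infty$ together with boundedness of $\{u_{nk}\}\subset\cM$, and for $p>1$ requires exactly the same H\"older step as above; and (ii) the fine tuning of the H\"older factorization, where the weight $k^{-1}$ is chosen precisely to extract the $k^p$ buried inside $A_{N,p}$ and no other choice of weight would yield a finite constant. A minor technical detail is verifying that the reduction $V\mapsto P_W V$ in the second part genuinely preserves approximation on $W$; this follows from the orthogonal decomposition recorded above.
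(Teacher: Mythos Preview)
Your proof is correct and follows essentially the same route as the paper: the first bullet via triangle inequality plus the H\"older factorization $|a_{nk}|=k^{-1}\cdot k|a_{nk}|$, and the second bullet by chaining the first bullet through $B_{2N}$ to $B'_{2N}$ and then bounding the $N$-width of the orthogonal family by a Parseval/pigeonhole count. The only cosmetic difference is that the paper compresses your explicit $g_n$-computation into a single line with the constant $1/\sqrt{2}$, and a minor point to tidy is that $\dim(V^\perp\cap W)=2N-\dim(P_WV)\ge N$ rather than exactly $N$, which only strengthens your inequality $\sum_n g_n\ge N$.
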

\begin{proof}
Suppose $B_{2N} = \{\phi_n\}_{n=1}^{2N}$ satisfies \cref{eq:ball}. 
Then we have for any $w_{nk} \in \VV$,
\beq
    \sum_{k=1}^\infty |a_{nk}| \Norm{u_{nk} - w_{nk}}{\VV}
    \ge
    \Normlr{\sum_{k=1}^\infty a_{nk} (u_{nk} - w_{nk})}{\VV}
    = 
    \Norm{\phi_n - v_n}{\VV},
\eeq
for $v_n := \sum_{k=1}^\infty a_{nk} w_{nk}$. If $p \in (1,\infty)$ with $A_{N,p}$ as in \cref{eq:ANp}, by H\"older's inequality 
\beq
\left( \sum_{k=1}^\infty k^p \abs{ a_{nk} }^p \right)^{\frac{1}{p}}
\left( 
\sum_{k=1}^\infty \abs{ \frac{\Norm{ u_{nk} - w_{nk} }{\VV}}{k} }^q \right)^{\frac{1}{q}}
\ge
\sum_{k=1}^\infty |a_{nk}| \Norm{u_{nk} - w_{nk}}{\VV},
\eeq
where $1/p + 1/q = 1$. Then using the fact that $C_p = p^{-\frac{1}{q}} \le (\sum_{k=1}^\infty 1/k^q)^{-\frac{1}{q}}$,
\beq
    \sup_k \Norm{u_{nk} - w_{nk}}{\VV}
    \ge
    \frac{C_p}{A_{N,p}} \Norm{\phi_n - v_n}{\VV}.
\eeq
This inequality is derived similarly for the case $p=1$. Noting that $w_{nk}$ was arbitrary, for any arbitrary subspace of $N$ dimensions $\VV_N$ of $\VV$ it follows,
\[
    \begin{aligned}
    \sup_{k} \inf_{w_{nk} \in \VV_N} \Norm{u_{nk} - w_{nk}}{\VV}
    \ge
    \frac{C_p}{A_{N,p}} \Norm{\phi_n - v_n}{\VV}
    \ge
    \frac{C_p}{A_{N,p}} 
    \inf_{v \in \VV_N}
    \Norm{\phi_n - v}{\VV},
    \end{aligned}
\]
and thus
\[
    \begin{aligned}
    \sup_{u \in \cM} \inf_{w \in \VV_N} \Norm{u - w}{\VV}
    \ge
    \frac{C_p}{A_{N,p}} \inf_{v \in \VV_N} \Norm{\phi_n - v}{\VV},
    \end{aligned}
\]
because $u_{nk} \in \Mcal$. 
Since the above holds for any $\phi_n \in B_{2N}$ we take the supremum on the right-hand side,
\beq
    \sup_{u \in \cM} \inf_{w \in \VV_N} \Norm{u - w}{\VV}
    \ge
    \frac{C_p}{A_{N,p}} 
    \sup_{\phi \in B_{2N}}
    \inf_{v \in \VV_N} \Norm{\phi - v}{\VV}.
\eeq
Taking the infimum on both sides over arbitrary $N$-dimensional subspaces $\VV_N,\WW_N$ of $\VV$
\beq
    \inf_{\VV_N}
    \sup_{u \in \cM} \inf_{w \in \VV_N} \Norm{u - w}{\VV}
    \ge
    \frac{C_p}{A_{N,p}} 
    \inf_{\WW_N}
    \sup_{\phi \in B_{2N}}
    \inf_{v \in \WW_N} \Norm{\phi - v}{\VV}.
\eeq
Since $A_{N,p} \lesssim 1$ for the given $p$, this proves $d(N;\cM) \gtrsim d(N;B_{2N})$, the first part of the lemma.

Suppose each $B_{2N}$ itself generates a $2N$-ball, $B'_{2N} = \{\psi_n\}_{n=1}^{2N}$ with $A'_{N,p} \lesssim 1$. Then $d(N; B_{2N}) \gtrsim d(N; B'_{2N})$ for all $N \in \NN$. If $B'_{2N} = \{\psi_n\}_{n=1}^{2N}$ is orthogonal, we can normalize with $D_n := 1/\Norm{\psi_n}{\VV}$ and set $\hat{\psi}_n := D_n \psi_n$ with $1/D_n \gtrsim N^{-\alpha}$. Recalling that $1/A_{N,p} \ge 1/C'$ for some constant $C'$ by \cref{eq:ANp}, 
\[
    d(N;\cM)
    \ge
    \frac{C_p}{A_{N,p}} 
    \inf_{\VV_N}
    \sup_{n \in \{1, ... , 2N\}}
    \frac{1}{D_n}
    \inf_{v \in \VV_N} \Norm{\hat{\psi}_n - v}{\VV}
    \gtrsim
    \left(\frac{C_p}{\sqrt{2} C' }\right) N^{-\alpha},
\]
proving $d(N;\cM) \gtrsim N^{-\alpha}$.
\end{proof}

The rate $\alpha$ in Lemma~\ref{lem:sharp} is independent of $p \in [1,\infty)$, which means that $p$ only affects the constants.

\subsection{Example: Constant-speed transport}

Numerical experiments suggest that $\cM$ described by transport-dominated problems such as \cref{eq:pde} exhibit algebraic decay rates in their Kolmogorov $N$-width \cite{amsallem16}. The following result is a rigorous lower bound for the solution manifold of a constant coefficient advection equation.

\begin{lemma}[\cite{Ohlberger16}] \label{lem:OR}
Consider the advection equation with constant speed $\mu \in [0,1]$
\[
    u_t(x,t;\mu) + \mu u_x (x,t;\mu) = 0,
    \quad
    u(x,0;\mu) = 0, 
    \quad
    u(0,t;\mu) = 1.  
\]
Let $\cM_A := \{u(\cdot,t;1): t \in [0,1]\} \subset L^2([0,1])$ then $d_N(\cM_A)
\gtrsim N^{-\half}$.
\end{lemma}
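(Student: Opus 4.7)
The plan is to apply Lemma~\ref{lem:sharp} by exhibiting that $\cM_A$ is $\tfrac12$-sharply convective. The required $2N$-ball will be built directly from the explicit form of the solution.

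First I would solve the PDE. The method of characteristics for $u_t + u_x = 0$ with $u(x,0;1)=0$ and $u(0,t;1)=1$ gives $u(x,t;1) = \chi_{[0,t]}(x)$, i.e.\ the indicator function of the interval $[0,t]$. Hence $\cM_A$ is essentially the one-parameter family of such step functions indexed by $t \in [0,1]$.

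Next I would construct a generating $2N$-ball by differencing. Fix $N \in \NN$, set $t_n := n/(2N)$ for $n=0,1,\dots,2N$, and define
\beq
    \phi_n := u(\cdot, t_n; 1) - u(\cdot, t_{n-1}; 1) = \chi_{[t_{n-1}, t_n]}, \qquad n = 1, \dots, 2N.
    \label{eq:plan-phi}
\eeq
Each $\phi_n$ is a linear combination of exactly two elements of $\cM_A$ (so $a_{n,1} = 1$, $a_{n,2}=-1$, and $a_{n,k}=0$ for $k \ge 3$), whence $A_{N,p} = (1 + 2^p)^{1/p} \lesssim 1$ uniformly in $N$ for every $p \in [1,\infty)$. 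The functions $\phi_n$ are supported on pairwise disjoint intervals, so they are linearly independent and pairwise orthogonal in $\VV = L^2([0,1])$. This already shows $\cM_A$ is $p$-convective for every $p$.

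For sharpness, the ball $B_{2N} = \{\phi_n\}_{n=1}^{2N}$ generates itself as an orthogonal $2N$-ball (take $\psi_n := \phi_n$, with trivial coefficients, yielding $A'_{N,p} = 1$). A direct computation gives
\beq
    \Norm{\psi_n}{\VV}^2 = \int_{t_{n-1}}^{t_n} 1 \, dx = \tfrac{1}{2N},
    \qquad \text{so} \qquad \Norm{\psi_n}{\VV} = (2N)^{-1/2} \gtrsim N^{-1/2}.
\eeq
Hence $\cM_A$ is $(\tfrac12, p)$-sharply convective for every $p \in [1,\infty)$, i.e.\ $\tfrac12$-sharply convective. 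Applying the second part of Lemma~\ref{lem:sharp} yields $d(N; \cM_A) \gtrsim N^{-1/2}$, as desired.

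The only delicate point is to ensure the constants in the construction are indeed uniform in $N$, which is immediate here because each $\phi_n$ uses only two elements of $\cM_A$; there is no genuine obstacle beyond writing out the solution formula and checking the (trivial) orthogonality.
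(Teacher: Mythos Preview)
Your proof is correct and follows essentially the same approach as the paper: the paper also shows $\cM_A$ is $\tfrac12$-sharply convective by setting $a_{n1}=1$, $a_{n2}=-1$, $a_{nk}=0$ otherwise, and taking consecutive differences $u(\cdot,\tfrac{n}{N+1})-u(\cdot,\tfrac{n-1}{N+1})$ to produce disjoint indicator functions, then invoking Lemma~\ref{lem:sharp}. Your version is somewhat more explicit (you write out the solution formula, compute $A_{N,p}$, and verify the norm $\|\psi_n\|_{\VV}=(2N)^{-1/2}$), but the idea and structure are identical.
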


The solution manifold is plotted in \cref{fig:sharply-convective}. The slow decay of the lower bound $N^{-\half}$ in \cref{lem:OR} implies that an efficient model reduction is provably impossible. That is, the lower bounds on the Kolmogorov $N$-width proves that, for a reduced solution $\ur$ or a reduced 2-layer network solution $\urs \in \cNrs$ to satisfy the error estimate \cref{eq:reduced-accuracy} and \cref{eq:rdn-accuracy}, respectively, for some $\eps \in (0,1)$ their dimension must be considerably large. 

Lemma~\ref{lem:OR} was proved directly in \cite{Ohlberger16}, and the $L^1$-norm version was proved in \cite{welper17} . An analogous result is proved for the wave equation in \cite{Greif19}. The ideas in the proof share commonalities with those in \cite{donoho01}. Our notion of sharply convective solution manifolds provides a concise proof.

\begin{proof}
$\cM_A$ is $\half$-sharply convective, as seen if one lets $a_{n1} = 1$, $a_{n2} = -1$, and $a_{nk} = 0$ for other $k$'s, and $u_{n1} = u_1(\cdot,\frac{n}{N+1})$, $u_{n2} = u_1(\cdot,\frac{n-1}{N+1})$. 
\end{proof}

\section{Depth separation}
We show by construction that even when the Kolmogorov $N$-width of a solution manifold decays slowly, there can exist an RDN that approximates the solution manifold with an approximation error that decreases at a geometric rate respect to the total degrees of freedom. 

Throughout, we will be concerned with a particular type of parametrized PDEs that determine the solution manifold $\cM$ \cref{eq:manifold}, namely the scalar conservation law on $\Dom := (0,1)$ that depends on the parameter $\bmu \in \cD$ of the form
\beq
    \left\{
    \begin{aligned}
        u_t + F(u,x;\bmu)_x &= \psi(u,x;\bmu), &  &\text{ in } \Dom \times (0,\tfin),\\
        u(x,0;\bmu) &= u_0(x;\bmu), &  &\text{ for } x \in \Dom,\\
        u(0,t;\bmu) &= u_0(0;\bmu). 
    \end{aligned}
    \right.
    \label{eq:pde}
\eeq
Here $F(\cdot,\cdot;\bmu) \in C^\infty(\RR \times \Dom)$ and $F(\cdot,x;\bmu)$ is strictly convex. For theoretical and numerical results regarding the PDE, we refer the reader to standard references \cite{lax72,evans10,fvmbook}.

\subsection{MATS architecture}

We introduce a special architecture to be used in our constructive RDN approximations, by extending the MATS approximation \cite{Rim19}. A specialized component is the implementation of the inverse using the network architecture, so we briefly discuss how we can approximate the inverse of a monotonically increasing function, using a deep network. In particular, we focus on the case the given function is itself also a network in $\cNd$. 

\begin{lemma} \label{lem:inv}
Given non-constant $f \in \cNd$ defined on $\Dom = [0,1]$  with $L$ layers satisfying $f' \ge 0$ in the sense of distributions, there is an approximate inverse $f^\flat \in \cNd$ which has $(L+4) \Linv$ layers such that $\Norm{f^{-1} - f^\flat}{L^\infty(\Dom)} \le \abs{\Dom} 2^{-\Linv}$. We will call $f^\flat$ \emph{approximate inverse with $\Linv$ layers}.
\end{lemma}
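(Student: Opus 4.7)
[Plan] My approach would be to approximate $f^{-1}$ by classical bisection implemented as a feed-forward network. Since $f$ is monotone nondecreasing, for any $y \in [0,1]$ the preimage lies in a bracket $[a,b]$ that one can halve at each step; after $\Linv$ halvings the bracket has width $2^{-\Linv} = |\Dom|\, 2^{-\Linv}$, which yields the stated accuracy as soon as we return any point in the final interval. What needs to be built is a constant-depth ``bisection unit'' containing one embedded evaluation of $f$ together with ReLU gadgets that carry out the bracket update.

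Concretely, I would maintain the state $(a,b,y) \in [0,1]^3$ and pass it through $L+4$ affine maps (with activations in between) per bisection step. The first affine map would compute $m = (a+b)/2$ while carrying $a,b,y$ forward. The next $L$ affine maps are those of the given network for $f$, applied to the $m$-component, with $a,b,m,y$ carried in parallel by the identity (valid since these stay in $[0,1]$ and $\sigma$ acts as the identity there). An affine map then forms $y - f(m)$ and the threshold activation $\varsigma$ converts it into $\chi := \varsigma(y-f(m)) \in \{0,1\}$, telling us which half of the interval to keep. Using the elementary identities
\[
    \chi z = \sigma(\chi + z - 1),
    \quad
    (1-\chi) z = \sigma(z - \chi),
    \qquad \chi \in \{0,1\},\ z \in [0,1],
\]
an affine map prepares the four quantities $a-\chi$, $\chi+m-1$, $\chi+b-1$, $m-\chi$; after a ReLU, a final affine map sums the pairs to produce the updated endpoints $a' = (1-\chi)a + \chi m$ and $b' = \chi b + (1-\chi) m$.

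Counting gives $1 + L + 1 + 1 + 1 = L + 4$ affine maps per bisection unit. Stacking $\Linv$ such units, and absorbing the trivial initialization $y \mapsto (0,1,y)$ into the first unit together with the final scalar readout $(a,b,y) \mapsto a$ into the last one, would yield a network $f^\flat \in \cNd$ of total depth $(L+4)\Linv$. Correctness should then follow from the bisection invariant $f^{-1}(y) \in [a_k,b_k]$ for every $k$, which is a direct consequence of the equivalence $\chi = 1 \Leftrightarrow f(m) \le y \Leftrightarrow m \le f^{-1}(y)$ provided by monotonicity; since $b_{\Linv} - a_{\Linv} = 2^{-\Linv}$, the bound $|f^\flat(y) - f^{-1}(y)| \le |\Dom|\, 2^{-\Linv}$ is immediate.

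The main nuisance is that $f(m)$ is a priori real-valued, so the activation placed between the last affine of the embedded $f$ and the subtraction $y - f(m)$ could clip a negative value. I would handle this either by carrying $f(m)$ through two channels $\sigma(f(m))$ and $\sigma(-f(m))$ and recombining linearly in the next affine, or, more parsimoniously, by fusing the final affine of $f$ directly into the affine that forms $y - f(m)$ so that no activation is ever applied to $f(m)$. Inputs $y$ outside the range $[f(0),f(1)]$ are handled automatically: every comparison returns the same bit, the bisection saturates at the appropriate endpoint of $[0,1]$, and this coincides with the natural extension of $f^{-1}$ to $\Dom$ implicit in the $L^\infty(\Dom)$-norm of the statement.
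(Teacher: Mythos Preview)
Your proposal is correct and follows essentially the same approach as the paper: implement bisection as a feed-forward network, with one embedded copy of $f$ per step plus a few ReLU/threshold layers to update the bracket, yielding $(L+4)\Linv$ layers in total. The only cosmetic difference is the bracket-update gadget---the paper uses the constant $c=|\Omega|$ to zero out one of two $\sigma$-branches via $\sigma(\pm c\gamma + (b-a)/2)$, whereas you use the product identities $\chi z=\sigma(\chi+z-1)$ and $(1-\chi)z=\sigma(z-\chi)$---but both are valid and give the same layer count.
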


\begin{proof}
The proof implements the bisection algorithm and is given in \cref{sec:proof:lem:inv}. 
\end{proof}

In the case where $f \in \cNd$ has a jump of positive magnitude, the bisection algorithm converges to the location of of the jump. The construction is easily extended to multiple spatial dimensions, although we will not make use of such extensions here.

Let us define,
\beq
    \begin{aligned}
    \cNd_\sharp &:= \{f \in \cNd \,|\, f' \ge 0 \},\\
    \cNd_\flat &:= \{f^\flat \,|\, f \in \cNd_\sharp \},
    \end{aligned}
    \quad
    \begin{aligned}
    \cNrd_\sharp
    &:=
    \{ f^{(r)} \in \cNrd \,|\,
    (f^{(r)})' \ge 0 \},    
    \\
    \cNrd_\flat
    &:=
    \{ g^\flat \,|\,
    g \in \cNrd_\sharp  \}.
    \end{aligned}
\eeq
Now, we specify the architecture we will use in the remainder of the work,
\beq
    \begin{aligned}
    \cNmats
    &:= 
    \{v \circ T_{L-1} \circ ... \circ T_1
    \,|\,
    v \in \cNd,
    T_1, ...  ,T_{L-1} 
    \in \cNd_\sharp
        \cup
        \cNd_\flat, L \in \NN
    \},
    \\
    \cNrmats
    &:= 
    \{v \circ T_{L-1} \circ ... \circ T_1
    \,|\,
    v \in \cNrd,
    T_1, ...  ,T_{L-1} 
    \in \cNrd_\sharp
        \cup
        \cNrd_\flat, L \in \NN
    \}.
    \end{aligned}
\eeq

\subsection{The regular and linear case: the color equation}
\label{sec:color}

Let us begin by defining two classes of functions.
Let $\mathbb{U}$ denote piecewise analytic functions. That is, $u \in \UU$ if and only if there exists a finite partition $\breve{\Omega}$ of $\Dom$ by intervals so that $u |_{\Omega'}$ is analytic for each $\Omega' \in \breve{\Dom}$.
Then, let $\mathbb{T} := \{T : T \in \mathbb{U}, T' \ge 0, T' \not\equiv 0\}$ where the derivative is in the sense of distributions.

Note that if $T \in \mathbb{T}$, $T^{-1}$ is well-defined near $T(x_0)$ for any $x_0 \in \Dom$ if $T'(x_0) > 0$. Since the set $\{T(x) : x \in \Dom, \exists y \in \Dom, x \ne y \text{ s.t } T(x) = T(y)\}$ has zero measure, $T^{-1}$ is defined almost everywhere in $T(\Dom)$, so for any $v \in \mathbb{U}$, $v \circ T^{-1} \in \mathbb{U}$ is defined almost everywhere.

\subsubsection{Limitation of classical reduced models}

Consider the solutions to the color equation (variable-speed transport) for which $F(u,x;\bmu) = c(x;\bmu) u$ and $\psi(u,x;\bmu) = c_x(x;\bmu) u$ in \cref{eq:pde}, with a given set of initial conditions $\UU_0 := \{u_0(\cdot;\bmu):\bmu \in \cD\} \subset \UU$.

\begin{assumption}\label{as:CC}
Let us assume that
\bitme
\item for each $u_0(\cdot;\bmu) \in \UU_0$, the maximal interval $\Dom' \subset \Dom$ containing $x \in \Dom$ on which it is analytic satisfies, $|\Dom'| > \gamma |\Dom|$ for some constant $\gamma > 0$,
\item the Kolmogorov $N$-width of $\UU_0$ decays exponentially with respect to $N$,
\item $\Norm{u_0'(\cdot;\bmu)}{L^\infty(\Dom)}, \Norm{u_0(\cdot,\bmu)}{TV(\Dom)} \lesssim 1$ where $\Norm{\cdot}{L^\infty(\Dom)}$ is the essential supremum  and $\Norm{\cdot}{TV(\Dom)}$ is the total variation, 
\item for all $\bmu \in \cD$, $c(w;\bmu)$ is analytic in $R:=\{w \in \CC \,|\, |w - x_0| < b, x_0 \in \Dom\}$ for some $b > b_0 > 0$, and $0 < c_0 \le c(x;\bmu) \lesssim 1$,
\item final time $\tfin < b/\sup_{w \in R, \bmu \in \cD} |c(w;\bmu)|$.
\eitme
\end{assumption}

We will denote by $\cM_C$ the solution manifold of such a parametrized PDE.

One can solve for each solution in $\cM_C$ by the method of characteristics by integrating along the characteristic curves \cite{evans10}. We will denote the characteristic curve for the initial condition $x_0$ by $X(t;x_0,\bmu)$. Then the ODEs for the characteristic curves are
\beq
\left\{\begin{aligned}
    X'(t;x_0,\bmu) &= c(X(t;x_0);\bmu), \quad t \in (0,\tfin),\\
     X(0;x_0,\bmu) &= x_0.
\end{aligned}\right.
    \label{eq:ode}
\eeq 
By classical ODE theory \cite[Theorem 8.1]{coddlev55}, $X(t;x_0)$ ($x_0 \in \Dom$) is analytic with respect to the variable $t$ in the neighborhood of $(0,\tfin)$.  We will write $X$ also as a function of its initial condition, $X(t,x;\bmu) := X(t;x,\bmu)$. Since $c$ is bounded away from zero, $\partial_{x} X > 0$ for $t \in (0,\tfin)$ ensuring that the the map is strictly increasing function of $x$. Furthermore, the following lemma shows that $X$ is analytic with respect to $x$.

\begin{figure}
    \centering
    \includegraphics[width=0.75\textwidth]{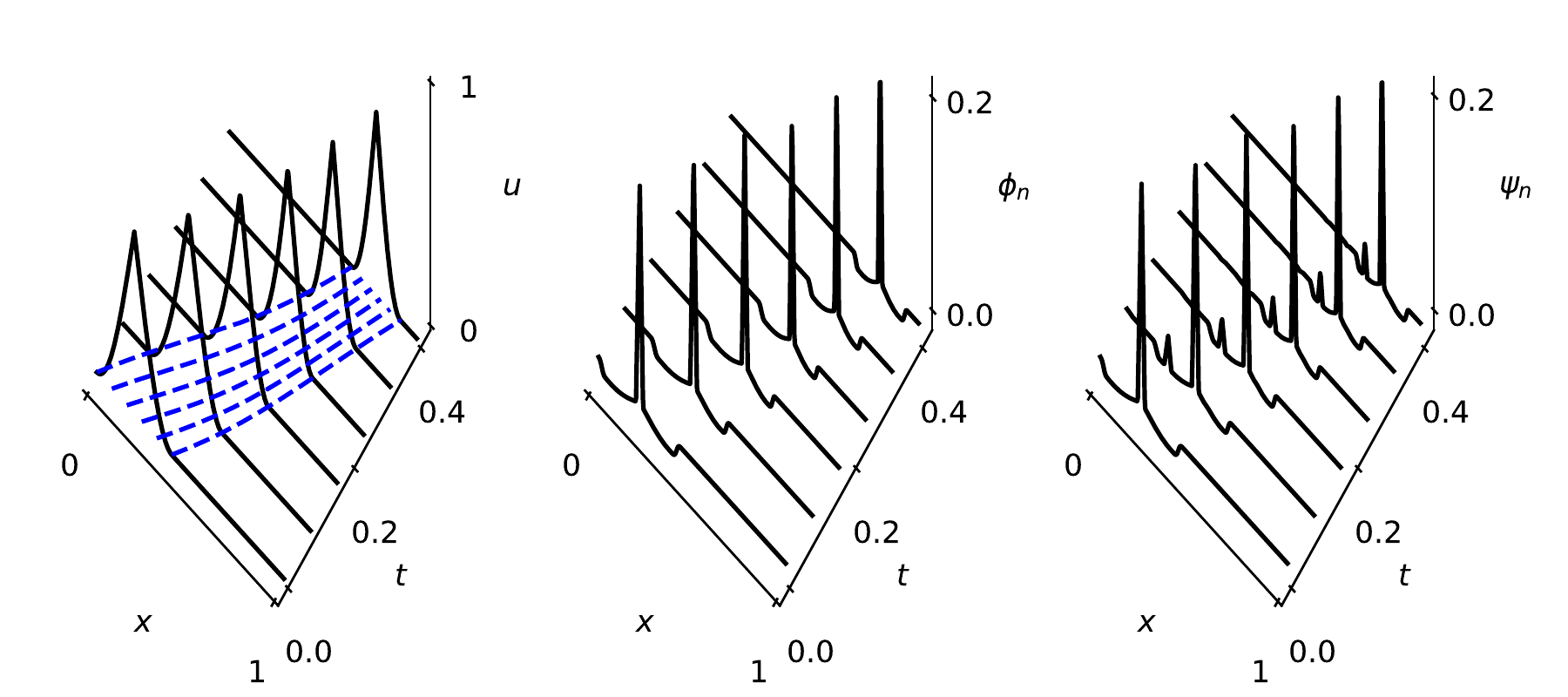}
    \caption{A solution manifold of the color equation with speed \cref{eq:speed-expl} with parameters values $\bmu  = (0.3,2\pi, \pi)$, along with characteristic curves $X(\cdot,x_0;\bmu)$ \cref{eq:ode} (left), an example of a $2N$-ball $\{\phi_n\}_{n=1}^{2N}$ derived from a finite difference stencil \cref{eq:vandermonde} (middle), an orthogonalized $2N$-ball $\{\psi_n\}_{n=1}^{2N}$ obtained by the Gram-Schmidt process \cref{eq:gram-schmidt} (right).}
    \label{fig:color-curves}
\end{figure}

\begin{lemma}\label{lem:analytic}
The characteristics $X(t,x;\bmu)$ \cref{eq:ode} in which $c$ satisfies the conditions in \cref{as:CC}, is analytic in $x \in \Dom$.
\end{lemma}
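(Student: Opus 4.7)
The plan is to complexify the initial condition and apply a Picard iteration in the complex domain, exploiting the analyticity of the speed $c$ on the strip $R$ guaranteed by \cref{as:CC}. Throughout I fix $\bmu \in \cD$ and drop it from the notation.

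First, I would extend $c(\cdot;\bmu)$ holomorphically to $R = \{w \in \CC : |w-x_0| < b,\ x_0 \in \Dom\}$, as given by \cref{as:CC}. For each $x_0 \in \Dom$, I would consider the complexified ODE $Z'(t) = c(Z(t))$ with $Z(0) = z_0$, where $z_0$ is allowed to lie in a small complex disk around $x_0$. The goal is to prove that for $z_0$ in such a disk and for $t \in [0,\tfin]$, the solution $Z(t;z_0)$ exists, remains in $R$, and depends holomorphically on $z_0$; the real analyticity of $X(t,\cdot;\bmu) = Z(t;\cdot)|_{\Dom}$ then follows immediately.

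Second, I would construct $Z$ via Picard iteration. Set $Z_0(t) \equiv z_0$ and $Z_{n+1}(t) := z_0 + \int_0^t c(Z_n(s))\,ds$. Writing $C := \sup_{w \in R,\ \bmu \in \cD}|c(w;\bmu)|$, the hypothesis $\tfin < b/C$ gives a margin $\eta := b - \tfin C > 0$. Restricting $z_0$ to the disk $\{|z_0 - x_0| < \eta/2\}$, an induction shows $|Z_n(t) - x_0| \le |z_0-x_0| + tC < b$ for all $t \in [0,\tfin]$, so every iterate stays in $R$. The Lipschitz constant of $c$ on the compact subset $\{w : |w - x_0| \le b - \eta/2\}$ then yields, by the standard contraction argument, uniform convergence of $Z_n$ to a solution $Z(\cdot;z_0)$ on $[0,\tfin]$, uniformly in $z_0$ on the disk.

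Third, each iterate $z_0 \mapsto Z_n(t;z_0)$ is holomorphic on the disk: the base case $Z_0(t;z_0) = z_0$ is trivial, and the inductive step follows because $c \circ Z_n$ is holomorphic in $z_0$ (composition of holomorphic functions) and integration in $s$ preserves holomorphy in the parameter $z_0$ (Morera plus Fubini). Uniform convergence on compact subsets of the disk then transfers holomorphy to the limit by Weierstrass's theorem, so $z_0 \mapsto Z(t;z_0)$ is holomorphic for each $t \in [0,\tfin]$. Restricting to real $z_0 = x$ gives $X(t,x;\bmu) = Z(t;x)$, which is therefore real-analytic in $x \in \Dom$. Since every $x_0 \in \Dom$ admits such a disk, analyticity holds on all of $\Dom$.

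The one delicate step is the \emph{a priori} containment of the complex characteristics inside the strip of analyticity $R$; this is exactly what the smallness condition $\tfin < b/\sup|c|$ of \cref{as:CC} is designed to enforce, and once that is secured the proof reduces to standard Picard iteration in the complex plane combined with Weierstrass's theorem on limits of holomorphic functions.
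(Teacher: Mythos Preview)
Your proof is correct, but it takes a genuinely different route from the paper's.

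The paper's argument exploits the autonomous structure of the ODE. Using the semi-group property $X(t_1+t,x_0)=X(t,X(t_1,x_0))$, it computes directly that
\[
\partial_x X(t,x)=\frac{c(X(t,x))}{c(x)}=:\tilde c(X,x),
\]
so for each fixed $t$ the flow map $x\mapsto X(t,x)$ is itself the solution of an ODE in the independent variable $x$ with analytic right-hand side (since $c$ is analytic and bounded below by $c_0>0$). Analyticity in $x$ then follows by invoking the analytic ODE theorem already cited in the paper (Coddington--Levinson, Theorem~8.1), which gives analyticity in the \emph{independent} variable.

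Your approach instead proves analytic dependence on the \emph{initial condition} from scratch: you complexify $x_0$, run Picard iteration inside the strip $R$ (using the smallness condition $\tfin<b/\sup|c|$ from \cref{as:CC} to keep all iterates in $R$), observe that each iterate is holomorphic in $z_0$, and pass to the limit via Weierstrass. This is the standard textbook argument and is entirely self-contained. The paper's argument is shorter once the cited theorem is in hand and makes elegant use of the autonomous structure to trade a parameter-dependence question for an independent-variable one; yours is more direct and would work equally well for non-autonomous $c(x,t;\bmu)$ analytic in $x$, where the semi-group trick is unavailable.
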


\begin{proof}
The proof is straightforward and is given in \cref{sec:proof:lem:analytic}.
\end{proof}

We express the transformation of the domain by
\beq
    T_{(t,\bmu)} : \Dom \to X(t,\Dom;\bmu) 
    \quad \text{ given by } \quad
    T_{(t,\bmu)}(x) := X(t,x;\bmu)
    \label{eq:Ttmu}
\eeq
so that $u_0(T_{(t,\bmu)}^{-1}(x)) = u(x,t;\bmu)$, $u_0(x) = u(T_{(t,\bmu)}(x),t;\bmu)$. Since $T_{(t,\bmu)}' > 0$, by virtue of \cref{lem:analytic} $T_{(t,\bmu)} \in \TT$.

\begin{example}
Consider the variable speed,
\beq
    c(x;\bmu)
    =
    1.5 + \mu_1 \sin(\mu_2 x) + 0.1 \cos(\mu_3 x),
    \label{eq:speed-expl}
\eeq
where $\bmu = (\mu_1, \mu_2, \mu_3) \in [0.25,0.50] \times [2\pi, 6\pi] \times [\pi, 1.1\pi]$. The the characteristic curves $X(t,x_0)$ and the corresponding transport maps $X(t_0,x)$ \cref{eq:ode} for two different values of $\bmu$ are shown in \cref{fig:color-curves}. For a numerical experiment concerning this example, see \cite{Rim19}.
\end{example}

Now we prove the lower bound for the Kolmogorov $N$-width of $\cM_C$ in the special case the solution is not analytic over the entire domain $\Dom$.

\begin{theorem} \label{thm:colorlb}
If certain $u_0 \in \UU_0$ is at most $s$-times continuously differentiable, that is, there is $s \in \NN_0$ for which $u_0^{(s)} \in C(\Dom)$ but $u_0^{(s+1)} \notin C(\Dom)$, then $d(N;\cM_C) \gtrsim N^{-s-\half}$. 
\end{theorem}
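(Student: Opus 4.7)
The plan is to invoke \cref{lem:sharp}(ii) by showing that $\cM_C$ is $(s + \half, p)$-sharply convective for some $p \in [1, \infty)$. First, I would fix $\bmu \in \cD$ such that $u_0(\cdot; \bmu)$ has the hypothesized regularity, with a jump of magnitude $\Delta \neq 0$ in $u_0^{(s+1)}$ at some $x_\star \in \Dom$, and choose $2N + s + 1$ equispaced times $\{t_j\}$ in a subinterval of $[0, \tfin]$ with spacing $\tau = \tfin/(2N + s + 1) \sim 1/N$, so that the junction trajectory $y_j := T_{(t_j, \bmu)}(x_\star)$ remains in a fixed subinterval of $\Dom$. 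By \cref{lem:analytic} together with $c(\cdot; \bmu) \ge c_0 > 0$ (from \cref{as:CC}), the map $t \mapsto T_{(t, \bmu)}(x_\star)$ is real-analytic and strictly increasing, so $|y_{j+1} - y_j| \gtrsim \tau$.

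Next, I would form the $2N$-ball $B_{2N} = \{\phi_n\}_{n=1}^{2N}$ by applying an $(s{+}1)$-th order Vandermonde finite-difference stencil (the one indicated in \cref{fig:color-curves}) to the time slices $v_j := u(\cdot, t_j; \bmu) \in \cM_C$, setting
\[
    \phi_n(x) = \sum_{k=0}^{s+1} a_{n,k}\, v_{n+k}(x),
\]
where the coefficients $a_{n,k}$ annihilate polynomial-in-$t$ contributions of degree $\le s$ and are normalized so that $A_{N,p} \lesssim 1$ uniformly in $N$ for some $p \in [1,\infty)$. Using the piecewise-analytic structure of $u_0$ together with \cref{lem:analytic}, each $\phi_n$ decomposes into a smooth analytic ``tail'' --- an $(s{+}1)$-th finite difference of compositions of real-analytic maps --- plus a localized B-spline-type bump near $y_n$ that encodes the jump $\Delta$.

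Applying Gram--Schmidt to $\{\phi_n\}$ then yields an orthogonal family $\{\psi_n\}$ whose coefficients relative to $\{\phi_n\}$ still satisfy $A'_{N,p} \lesssim 1$. The central quantitative claim is that $\|\psi_n\|_{\VV} \gtrsim N^{-s-\half}$: the smooth analytic tails of the different $\phi_n$'s lie close to a common low-dimensional polynomial span (by analyticity of the $t$-dependence) and are thus eliminated by orthogonalization at constant cost, while the singular bumps are supported on essentially disjoint intervals of width $\sim \tau$ and survive with $L^2$-norm of order $\tau^{s+\half}$. This scaling matches the Sobolev regularity $H^{s+\half-\epsilon}$ of $u_0$, and \cref{lem:sharp}(ii) then yields $d(N; \cM_C) \gtrsim N^{-s-\half}$.

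The main obstacle will be making this Gram--Schmidt estimate quantitative. Establishing $\|\psi_n\|_{\VV} \gtrsim N^{-s-\half}$ rigorously splits into (i) showing that the smooth tails from distinct $\phi_n$'s are captured by a polynomial span whose dimension grows much more slowly than $N$, so that orthogonalization removes them without inflating coefficients, and (ii) controlling the cross inner products between the B-spline bumps, which follows from the separation $|y_n - y_m| \gtrsim |n - m|\tau$ of the bump centers. Step (i) is the delicate one and will rely on the real-analyticity of $T_{(\cdot, \bmu)}$ to ensure that the $t$-dependence of $v_j$ along each analytic piece of $u_0$ is uniformly analytic, so a fixed-rank polynomial approximation suffices. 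The exponent $s + \half$ then matches the Sobolev scaling of a bump of width $\tau$ whose $(s{+}1)$-th derivative jumps by $O(1)$, in analogy with the proof of \cref{lem:OR}.
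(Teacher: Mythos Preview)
Your overall plan --- fix a parameter, apply a finite-difference-in-time stencil to the snapshots to form $\{\phi_n\}$, then Gram--Schmidt to an orthogonal $\{\psi_n\}$ and invoke \cref{lem:sharp}(ii) --- is exactly the paper's approach. The one substantive difference is the \emph{order} of the stencil, and it bears directly on the obstacle you flag.

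You propose an $(s{+}1)$-th order stencil (annihilating degree $\le s$). The paper instead takes a stencil of order $s_1$ with $s_1 > 2s + 2$ and $K > s_1$ nodes. The payoff: away from the singularity window $S_n$ one gets $|\phi_n| \lesssim (\Delta t)^{s_1}$, while inside $S_n$ one still has $|\phi_n| \sim (\Delta t)^s$ on a set of measure $\sim \Delta t$, so $\|\phi_n\| \sim (\Delta t)^{s+1/2}$ as before. But now the normalized cross terms satisfy $|(\hat\phi_n,\hat\phi_m)| \lesssim (\Delta t)^{s_1 - 2s - 1}$, and summing over $m$ gives $\sum_{m\ne n}|(\hat\phi_n,\hat\phi_m)| \lesssim N(\Delta t)^{s_1 - 2s - 1} \lesssim (\Delta t)^{s_1 - 2s - 2} \to 0$. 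The Gram matrix is therefore strictly diagonally dominant; the Gram--Schmidt upper-triangular factor $\bfTheta$ and its inverse have off-diagonal entries $\lesssim (\Delta t)^{s_1 - 2s - 3/2}$, and both $A'_{N,1} \lesssim 1$ and $\|\psi_n\| \gtrsim (\Delta t)^{s+1/2}$ follow by elementary estimates.

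With your $(s{+}1)$-th order stencil the same computation gives $\sum_{m\ne n}|(\hat\phi_n,\hat\phi_m)| = O(1)$, diagonal dominance fails, and you are forced into the low-rank tail argument you correctly identify as the delicate step. That argument may well be salvageable, but the paper sidesteps it entirely by spending a few extra stencil points to make the tails negligible \emph{outright} rather than merely low-rank. This costs nothing --- the stencil coefficients $b_k$ from the Vandermonde system \cref{eq:vandermonde} are still $O(1)$ and independent of $\Delta t$ --- and eliminates your step~(i). I would adopt this trick.
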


\begin{proof}
The lower bound holds for the solutions of the PDE for a single fixed parameter. Let us fix $\bmu^* \in \cD$, and $u_0 \in \UU_0$ be chosen to satisfy the hypothesis, then let
\beq
    \cM_C^* := \{ u(\cdot,t;\bmu^*) : u(\cdot,0;\bmu^*) = u_0, t \in [0, \tfin]\}.
\eeq
Then $\cM_C^* \subset \cM_C$ so $d(N;\cM_C) \ge d(N;\cM_C^*)$. Let us introduce the shorthands $T(x,t) := T_{(t,\bmu^*)}(x)$, and $T^\dagger(x,t) := T^{-1}_{(t,\bmu^*)}(x)$.

Recall that the maximal intervals $\Dom_\ell \subset \Dom$ in which $u_0$ is analytic satisfy $|\Dom_\ell| \ge \gamma |\Dom|$. Then there exists a $s \in \NN$ for which $u_0^{(s+1)}$ has a jump discontinuity at $\inf \Dom_\ell$ for some $\ell$. Set $x_0 \in \Dom$ as the location of the jump. By the chain rule,
\beq
\p_t^{(s+1)} u(x,t) 
=
\p_t^{(s+1)} u_0(T^*(x,t)) 
= 
u_0^{(s+1)}(T^\dagger(x,t)) \p^{(s+1)}_t (T^\dagger(x,t)) + \cdots, 
\label{eq:ptsu}
\eeq
so $\p_t^{(s+1)} u(\cdot,t)$ has a jump at $T(x_0,t)$ as well. 

We further restrict the functions in $\cM_C^*$ to $\Dom_\ell$, letting $\cM_C^{**} := \{u |_{\Dom_\ell}: u \in \cM_C^*\}$ then $d(N;\cM_C) \ge d(N;\cM_C^{**})$.

We will make use of a finite difference approximation to $\p_t^{s_1} u$ with $s_1 > s$. Let us fix $N$, and choose a (scaled) finite difference stencil of size $K$, $( b_k )_{k = 1}^K$ over an equidistant grid near  $\tau_n \in[0,t_1]$, $t_1 \in (0,\tfin)$ being the finite time during which $u(\cdot,t)$ is analytic in $\Dom_\ell \setminus \{T(x_0,t)\}$ for $t \in [0,t_1]$. We denote the equidistant grid by
\beq
    \tau_{nk} := \tau_n + (k-1) (\Delta t),
    \qquad
    n = 1, ... , 2N, 
    \quad
    k = 1, ... , K.
    \label{eq:taunk}
\eeq
To approximate the $\p_t^{s_1} u$ with $K$ orders of accuracy, we use the scaled stencil given by the Vandermonde system, which solves for $( b_k )_{k=1}^K$ with $K > s_1 > s$ and $i=1, ... , K$
\beq
    \begin{aligned}
    \sum_{k=1}^K \frac{(\tau_{nk} - \tau_n)^{i-1}}{(i-1)!} b_k 
    =
    \sum_{k=1}^K \frac{((k-1) \Delta t)^{i-1}}{(i-1)!} b_k 
    =
    \begin{cases}
    (\Delta t)^{s_1},& \text{ if } i-1 = s_1\\
               0,& \text{ otherwise,} \\
    \end{cases}
    \end{aligned}
\eeq
(see for example \cite[Ch2]{fdmbook}). Dividing through the $i$-th equation by $(\Delta t)^{i-1}$ yields 
\beq
    \sum_{k=1}^K \frac{(k-1)^{i-1}}{(i-1)!} b_k 
    =
    \begin{cases}
        1,& \text{ if } i-1 = s_1,\\
        0,& \text{ otherwise,} \\
    \end{cases}
    \label{eq:vandermonde}
\eeq
so each $b_k$ is independent of $\Delta t$. We choose $\{\tau_n\}_{n=1}^{2N}$ and the stencil width $\Delta t$ small enough so that $\Delta t \le \half$, $\Delta t \lesssim N^{-1}$ and 
\beq
    S_n := \{x \in \Dom_\ell \,|\, |x - T(x_0,\tau_n)| \le \bar{c} K\Delta t \},
    \quad
    \bar{c} := \sup_{x \in \Dom} c(x;\mu^*),
    \label{eq:Sn}
\eeq
are pairwise disjoint for $n = 1, \dots, 2N$. 

Take $\phi_n := \sum_{k=1}^K b_k u_{nk}$, in which $u_{nk} := u(\cdot, \tau_{nk}).$  Then since $u_{nk}$ is analytic in $\Dom_\ell \setminus S_n$,
\beq
   \abs{\phi_n(x)
    - 
    (\Delta t)^{s_1}
    \p_t^{s_1} u(x,\tau_n)} 
    \lesssim (\Delta t)^{K}
    \,\text{ in }
    x \in \Dom_\ell \setminus S_n,
    \text{ for } n = 1, ... , 2N,
\eeq
as can be derived from the the relation \cref{eq:vandermonde}. Now $u(\cdot,\tau_n)$ is analytic in $\Dom_\ell \setminus \{T(x_0,\tau_n)\}$ therefore $\abs{\phi_n} \lesssim (\Delta t)^{s_1} $ in $\Dom_\ell \setminus S_n$ because for $x \in \Dom_\ell \setminus S_n$,
\beq
    \abs{\phi_n(x)}
    \le
    \abs{\phi_n(x) - (\Delta t)^{s_1}
    \p_t^{s_1} u(x,\tau_n)}
    +
    \abs{(\Delta t)^{s_1}
    \p_t^{s_1} u(x,\tau_n)}
    \lesssim (\Delta t)^{s_1}.
\eeq
In contrast, in $S_n$ that contains the location of singularities of $\phi_n$, finite-difference error estimates (\cite{fdmbook}) yield $|\phi_n| \sim (\Delta t)^s$.

Noting that $S_n$ \cref{eq:Sn} has the measure $\abs{S_n} = \bar{c} K (\Delta t)$, 
\beq
    \begin{aligned}
    \NormLzdl{\phi_n}^2
    = \left(\int_{\Dom_\ell \setminus S_n}  + \int_{S_n} \right) \abs{\phi_n(x)}^2 \dx
    \lesssim  \abs{\Dom_\ell}(\Delta t)^{2s_1} + \abs{S_n} (\Delta t)^{2s}
    \lesssim (\Delta t)^{2s+1}.
    \end{aligned}
\eeq
A similar calculation yields the lower bound $\NormLzdl{\phi_n}^2 \gtrsim (\Delta t)^{2s + 1}$, so $\NormLzdl{\phi_n} \sim (\Delta t)^{s + \half}$, and furthermore
\beq
    \abs{(\phi_n,\phi_m)_{\Dom_\ell \setminus (S_n \cup S_m)}}
    \lesssim  (\Delta t)^{2s_1},
    \quad
    \abs{(\phi_n,\phi_m)_{S_n \cup S_m}}
    \lesssim
    (\Delta t)^{s_1 + s + 1}.
\eeq
If we let $\hat{\phi}_n := \phi_n / \NormLzdl{\phi_n}$, then for $n \neq m$,
\beq
    \begin{aligned}
    \abs{(\hat{\phi}_n,\hat{\phi}_m)} 
    &= 
    \frac{\abs{(\phi_n,\phi_m)}}
    { \NormLzdl{\phi_n} \NormLzdl{\phi_m}}
    \lesssim 
    (\Delta t)^{s_1 -2s-1},
    \end{aligned}
\eeq
in which $C_7(K,s_1) = (\half C_5(K,s_1) + C_6(K,s_1)) / c_4(K,s_1)$.
\beq
    \sum_{m \ne n} \abs{ (\hat{\phi}_n, \hat{\phi}_m)}
    \lesssim 2N (\Delta t)^{s_1 - 2s - 1}
    \lesssim (\Delta t)^{s_1 - 2s - 2}.
\eeq
So for any $s_1 > 2s + 2$, if $\Delta t$ is smaller than a constant $C_1$ that depends only on $s_1$ and $K$, the matrix $\bfC_{nm} := (\hat{\phi}_n, \hat{\phi}_m)$ is strictly diagonally dominant and thus invertible \cite[Corollary 5.6.17]{horn12}, and so $\{\phi_n\}$ is linearly independent. Letting $B_{2N} = \{\phi_n\}_{n=1}^{2N}$, it follows that $\cM^{**}_C$ is $p$-convective, and by the first part of \cref{lem:sharp}, $d(N; \cM_C^{**}) \gtrsim d(N; B_{2N})$.

We now show that each $B_{2N}$ generates an orthogonal $2N$-ball $B'_{2N}$ with $p=1$ and $A'_{N,p} \lesssim 1$. By the Gram-Schmidt process (without normalization), define $\{\psi_n\}_{n=1}^{2N}$,
\beq
    \psi_n 
    := 
    \phi_n - \sum_{m < n} (\phi_n, \hat{\psi}_m) \hat{\psi}_m
    = 
    \phi_n + \sum_{m < n} \theta_{nm} \psi_m,
    \label{eq:gram-schmidt}
\eeq
in which $\hat{\psi}_m := \psi_m / \NormLzdl{\psi_m}$, and $\theta_{nm} :=- (\phi_n,\psi_m)/\NormLzdl{\psi_m}^2$. Then writing \cref{eq:gram-schmidt} as a linear system,
\beq
\left[
\phi_1, \phi_2, ... , \phi_{2N} 
\right] 
=
\left[
\psi_1, \psi_2, ... , \psi_{2N} 
\right] \bfTheta,
\quad
\bfTheta :=
\begin{bmatrix}
1 & \theta_{21} &  \theta_{31} & \cdots & \theta_{(2N)1} \\
  &           1 &  \theta_{32} & \cdots & \theta_{(2N)2} \\
  &             &           1  & \ddots &         \vdots \\
  &             &              & \ddots &   \theta_{(2N)(2N-1)} \\
  &             &              &        &              1 \\
\end{bmatrix}.
\label{eq:gram-schmidt-triu}
\eeq
We claim that $\abs{\theta_{nm}} \lesssim (\Delta t)^{s_1 -2s -\frac{3}{2}}$. Firstly,
\beq
\begin{aligned}
\frac{\abs{(\phi_n,\psi_m)}}{\NormLzdl{\psi_m}}
&\le 
\sum_{m < n} 
\frac{\abs{(\phi_n, \phi_m)}}{\NormLzdl{\phi_m}}
\lesssim
\frac{ (\Delta t)^{s_1} (2N)^\half}{\min_{p \le m} \NormLzdl{\phi_p}} 
\lesssim 
(\Delta t)^{s_1 - s - 1},
\end{aligned}
\eeq
Secondly,
\beq
    \begin{aligned}
    \NormLzdl{\phi_n} - \NormLzdl{\psi_n} 
    &=
    \NormLzdl{\phi_n}
    - \NormLzdl{\phi_n - \sum_{m < n} (\phi_n, \hat{\psi}_m) \hat{\psi}_m}    
    \le
    \sum_{m < n} \abs{(\phi_n, \hat{\psi}_m)} 
    \\&\le
    \sum_{m < n} \abs{(\phi_n, \hat{\phi}_m)} 
    \le
    \sum_{m < n} \abs{(\hat{\phi}_n, \hat{\phi}_m)}
    \NormLzdl{\phi_n}
    \\&\lesssim
    2N (\Delta t)^{s_1 -2s -1} 
    \NormLzdl{\phi_n}
    \lesssim  (\Delta t)^{s_1 -2s -2} \NormLzdl{\phi_n}.
    \end{aligned}
\eeq
Since we will choose $s_1 > 2s+ 2$, for $\Delta t$ smaller than a constant $C_2$ that depends only on $s_1$ and $K$, we have $\NormLzdl{\phi_n} \lesssim \NormLzdl{\psi_n}$ and so $\NormLzdl{\psi_n} \gtrsim (\Delta t)^{s + \half}$. This proves the upper bound on $\abs{\theta_{mn}}$.

Now let $\psi_n := \sum_{k=1}^n a_{nk} \phi_k$ for $n = 1, ... , 2N$ in which $a_{nk}$ is the $(k,n)$-th entry of $\bfTheta^{-1}$ in which $\bfTheta$ was defined in \cref{eq:gram-schmidt-triu}. The strictly upper triangular entries of $\bfTheta^{-1}$ must scale equivalently with those of $\bfTheta$. Thus we have $\abs{a_{nk}} \lesssim (\Delta t)^{s_1-2s-\frac{3}{2}}$ for $k < n$ and $a_{nn} = 1$ ($n = 1, ... , 2N$). So for any $s_1 > 2s + 2$ as above,
\beq
    \sum_{k=1}^n \abs{a_{nk}}
    \lesssim
    1 + \sum_{k=1}^{2N} \frac{1}{N^{s_1-2s-\frac{3}{2}}}
    \lesssim 1.
\eeq    

We let $\Delta t \sim N^{-1}$, where $\Delta t$ is ensured to be smaller that constants $C_1, C_2$ that depend on $s_1$ and $K$ defined above. Then we have shown that $B_{2N}$ itself generates the $2N$-ball $B'_{2N} = \{\psi_n\}_{n=1}^{2N}$, with $A'_{N,1} \lesssim 1$. Since $B'_{2N}$ is orthogonal and $\NormLzdl{\psi_n} \gtrsim \NormLzdl{\phi_n} \gtrsim (\Delta t)^{s+\half} \sim N^{-s-\half}$, we obtain that $d(N;\cM_C) \gtrsim N^{-s-\half}$ by applying the second part of \cref{lem:sharp}.
\end{proof}

\subsubsection{An efficient RDN approximation}

We will first approximate $T_{(t,\bmu)}$ using Chebyshev polynomials,
as its analyticity (\cref{lem:analytic}) implies that its polynomial approximation will converge geometrically. The convergence rate of the Chebyshev basis to an analytic function is expressed in terms of the radii of Bernstein ellipses \cite{trefethen}.

\begin{assumption}\label{as:BE}
Let $\rho(t,\bmu)$ denote the radius of the closed Bernstein ellipse $\bar{E}_\rho(t,\bmu)$ in which $T_{(t,\bmu)}$ is analytic.
Suppose that
\beq
    \rho_* :=  \inf_{t,\bmu} \rho(t,\bmu) > 1,
    \Aand
    \sup_{x,t,\bmu} |T_{(t,\bmu)}(x)| < \infty.
    \label{eq:bernstein-radius}
\eeq
\end{assumption}

\begin{lemma}\label{lem:chebyshev}
If \cref{as:BE} holds, then we have
\beq
    \Norm{T_{(t,\bmu)} - \mathfrak{T}^M_{(t,\bmu)}}{L^\infty(\Dom)} 
    \lesssim
    \rho_*^{-M},
    \quad
    \Norm{T'_{(t,\bmu)} - (\mathfrak{T}^M_{(t,\bmu)})'}{L^\infty(\Dom)} 
    \lesssim 
    \rho_*^{-M}.
    \label{eq:chebyshev-error}
\eeq
in which $\mathfrak{T}^M_{(t,\bmu)}$ is a Chebyshev polynomial of degree $M$ which is expressed in terms of the Chebyshev basis $\{p_m\}_{m=1}^\infty$ as 
\beq
    \mathfrak{T}^M_{(t,\bmu)}(x) = \sum_{m=1}^M \gamma_m(t,\bmu) p_m(x).
\eeq
\end{lemma}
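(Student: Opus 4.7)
The plan is to invoke the classical fact that Chebyshev expansions of a function analytic on a closed Bernstein ellipse $\bar E_\rho$ (with $\rho > 1$) converge geometrically at rate $\rho^{-1}$, and to verify that the uniformity in $(t,\bmu)$ required by \cref{eq:chebyshev-error} follows from \cref{as:BE}. After an affine identification of $\Dom = [0,1]$ with $[-1,1]$, write the Chebyshev expansion $T_{(t,\bmu)}(x) = \sum_{m=0}^\infty \gamma_m(t,\bmu) p_m(x)$ with $\mathfrak{T}^M_{(t,\bmu)}$ its degree-$M$ truncation.

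First, I would derive the coefficient estimate via the standard contour-integral representation of Chebyshev coefficients: substituting $z = \tfrac12(w + w^{-1})$ and integrating on $|w| = \rho_*$, analyticity of $T_{(t,\bmu)}$ on $\bar E_{\rho_*}$ together with the uniform sup-bound $M^* := \sup_{z \in \bar E_{\rho_*},\,(t,\bmu)} |T_{(t,\bmu)}(z)| < \infty$ from \cref{as:BE} yield
\[
    |\gamma_m(t,\bmu)| \;\lesssim\; M^*\, \rho_*^{-m},
\]
with the implicit constant independent of $(t,\bmu)$. Summing the tail and using $|p_m(x)| \le 1$ on $[-1,1]$ gives the first inequality of \cref{eq:chebyshev-error}:
\[
    \Norm{T_{(t,\bmu)} - \mathfrak{T}^M_{(t,\bmu)}}{L^\infty(\Dom)}
    \;\le\; \sum_{m > M} |\gamma_m(t,\bmu)|
    \;\lesssim\; \frac{\rho_*^{-M}}{1 - \rho_*^{-1}}
    \;\lesssim\; \rho_*^{-M}.
\]

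For the derivative bound, I would use either (i) the identity $p_m'(x) = m\, U_{m-1}(x)$ with $|U_{m-1}(x)| \le m$ on $[-1,1]$, giving $|(T_{(t,\bmu)} - \mathfrak{T}^M_{(t,\bmu)})'(x)| \lesssim \sum_{m > M} m^2 \rho_*^{-m} \lesssim M^2 \rho_*^{-M}$, then absorb the polynomial prefactor by replacing $\rho_*$ with any slightly smaller $\tilde\rho_* \in (1,\rho_*)$ at the cost of a larger constant; or (ii) apply the Chebyshev convergence theorem directly to $T_{(t,\bmu)}'$, which by Cauchy's integral formula is analytic and uniformly bounded on any $\bar E_{\rho'}$ with $1 < \rho' < \rho_*$. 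Either route yields the second inequality of \cref{eq:chebyshev-error} with the same geometric rate (up to shrinking $\rho_*$).

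The main obstacle I anticipate is bookkeeping of uniformity in $(t,\bmu) \in [0,\tfin] \times \cD$: every implicit constant above must be independent of the parameters, which is precisely why \cref{as:BE} bundles a uniform \emph{infimum} $\rho_* > 1$ of the Bernstein radii with a uniform sup-bound on $T_{(t,\bmu)}$. Both facts feed into the contour estimate for $\gamma_m(t,\bmu)$; the regularity available from \cref{lem:analytic} and compactness of $[0,\tfin]\times\cD$ justify that the sup-bound can indeed be taken uniform. Once these uniformities are in place, the Chebyshev estimates above complete the proof.
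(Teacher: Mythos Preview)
Your proposal is correct and follows the same approach as the paper: invoke the classical geometric convergence of Chebyshev expansions for functions analytic in a Bernstein ellipse, with the uniformity in $(t,\bmu)$ supplied by \cref{as:BE} and the analyticity in $x$ by \cref{lem:analytic}. The paper's own proof is simply the one-line citation ``Follows directly from \cref{lem:analytic} and \cite[Theorem~8.6]{trefethen}'', so your argument is a faithful unpacking of exactly that reference; your remark about absorbing the polynomial prefactor in the derivative bound by slightly shrinking $\rho_*$ is the standard way to reconcile the stated rate with what the cited theorem actually delivers.
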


\begin{proof}
Follows directly from \cref{lem:analytic} and \cite[Theorem 8.6]{trefethen}.
\end{proof}

Since Chebyshev polynomials are smooth, they can be approximated by a 2-layer network $\underline{\xi}_m \in \cNs$ of width $N_\delta$ that satisfies $\Norm{p_m - \underline{\xi}_m}{L^\infty(\Dom)} < \eps$, $\Norm{p'_m - \underline{\xi}'_m}{L^\infty(\Dom)} < \eps$ for any given $\eps \in (0,1)$. This yields the approximation of $T^M_{(t,\bmu)}(x)$ in $\cNrs$ of the form
\beq
    T^M_{(t,\bmu)}(x) = \sum_{m=1}^M \gamma_m(t,\bmu) \underline{\xi}_m(x)
    \label{eq:1layer-chebyshev}
\eeq
that satisfies the same estimates as \cref{eq:chebyshev-error}
\beq
    \Normlr{T_{(t,\bmu)} - T^M_{(t,\bmu)}}{L^\infty(\Dom)} 
    \lesssim
    \rho_*^{-M},
    \quad
    \Normlr{T'_{(t,\bmu)} - (T^M_{(t,\bmu)})'}{L^\infty(\Dom)} 
    \lesssim 
    \rho_*^{-M}.
\eeq
Due to the the derivatives being uniformly accurate, and since $T_{(t,\bmu)}' \ge c_0  / c_1 > 0$ in which $c_1 = \sup_{(x,\bmu) \in \Dom \times \cD} c(x;\bmu)$, we have that $(T_{(t,\bmu)}^M)' > c_0/c_1 - \eps$ for $M \gtrsim \abs{\log \eps}$. 

Next, we construct an RDN approximation using $\{\underline{\xi}_m\}_{m=1}^M$ as reduced activations in the hidden layer.

\begin{theorem} \label{thm:construct}
Suppose the solution manifold $\cM_C$ satisfying \cref{as:CC} also has characteristic curves $T_{(t,\bmu)}$ that satisfy \cref{as:BE}. Then for any error threshold $\eps \in (0,1)$ there exists a reduced deep network solution manifold $\cMrd_C \subset \cNrmats$ with total degrees of freedom $M \lesssim |\log \eps|$. 
\end{theorem}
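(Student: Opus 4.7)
The plan is to construct $\urd$ as a MATS-type composition
\[
\urd(\cdot,t;\bmu) \;=\; \tilde u_0(\cdot;\bmu) \circ (T^M_{(t,\bmu)})^\flat,
\]
where $T^M_{(t,\bmu)}$ is a reduced two-layer Chebyshev expansion of the characteristic map $T_{(t,\bmu)}$ of \cref{eq:Ttmu}, the $\flat$-operation denotes the approximate inverse built by the bisection network of \cref{lem:inv}, and $\tilde u_0$ is a reduced two-layer approximation of the initial profile $u_0(\cdot;\bmu)$. Setting $v := \tilde u_0$ and $T_1 := (T^M_{(t,\bmu)})^\flat$ in the defining composition of $\cNrmats$ places $\urd$ in that architecture with depth $L=2$; the starting representation is $u(x,t;\bmu) = u_0(T_{(t,\bmu)}^{-1}(x);\bmu)$ from the method of characteristics.

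For the transport map, \cref{lem:chebyshev} together with \cref{eq:1layer-chebyshev} yields $T^M_{(t,\bmu)} = \sum_{m=1}^M \gamma_m(t,\bmu)\,\underline\xi_m \in \cNrs$ with fixed reduced activations $\underline\xi_m$ satisfying $\|T_{(t,\bmu)} - T^M_{(t,\bmu)}\|_{L^\infty(\Dom)} \lesssim \rho_*^{-M}$ together with the same bound on derivatives. Because $T'_{(t,\bmu)} \ge c_0/c_1 > 0$ uniformly by \cref{as:CC}, the derivative $(T^M_{(t,\bmu)})'$ remains uniformly positive once $M$ is a sufficiently large multiple of $|\log\eps|$, placing $T^M_{(t,\bmu)} \in \cNrd_\sharp$; \cref{lem:inv} then supplies $(T^M_{(t,\bmu)})^\flat \in \cNrd_\flat$ with $\|(T^M_{(t,\bmu)})^{-1} - (T^M_{(t,\bmu)})^\flat\|_{L^\infty} \le |\Dom|\,2^{-\Linv}$. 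For the initial condition, the exponential decay of the Kolmogorov $n$-width of $\UU_0$ from \cref{as:CC} (ii) provides an $n$-dimensional subspace with a $\bmu$-independent basis; approximating each basis element in $L^2(\Dom)$ by a fixed two-layer network produces $\tilde u_0(\cdot;\bmu) = \sum_{k=1}^n \alpha_k(\bmu)\,\underline\varphi_k \in \cNrs$ with $\sup_{\bmu\in\cD}\|u_0(\cdot;\bmu) - \tilde u_0(\cdot;\bmu)\|_\VV \lesssim e^{-cn}$.

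Writing $T \equiv T_{(t,\bmu)}$ and $T^M \equiv T^M_{(t,\bmu)}$ for brevity, the triangle inequality gives
\begin{align*}
\|u - \urd\|_\VV
&\le \|u_0 \circ T^{-1} - u_0 \circ (T^M)^{-1}\|_\VV + \|u_0 \circ (T^M)^{-1} - u_0 \circ (T^M)^\flat\|_\VV \\
&\quad + \|u_0 \circ (T^M)^\flat - \tilde u_0 \circ (T^M)^\flat\|_\VV.
\end{align*}
For the first two terms, I combine the Lipschitz control $\|u_0'\|_{L^\infty(\Dom)} \lesssim 1$ from \cref{as:CC} (iii) on the analytic part of $u_0$ with the bound $\|u_0\|_{TV(\Dom)} \lesssim 1$ at the finitely many jumps: near a jump, the measure of the set on which the two inner arguments straddle the discontinuity is controlled by the $L^\infty$ distance of the maps divided by the uniform lower bound on $(T^M)'$, costing at most a square root and yielding bounds of order $\rho_*^{-M/2}$ and $2^{-\Linv/2}$ respectively. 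For the third term, a change of variables driven by $(T^M)^\flat$ (whose derivative is $\sim 1$ up to the $\flat$-error) reduces it to $\lesssim \|u_0(\cdot;\bmu) - \tilde u_0(\cdot;\bmu)\|_\VV \lesssim e^{-cn}$. Choosing $M$, $\Linv$, and $n$ all of order $|\log\eps|$ drives every term below $\eps$; the only $(t,\bmu)$-dependent reduced weights are $\{\gamma_m(t,\bmu)\}_{m=1}^M \cup \{\alpha_k(\bmu)\}_{k=1}^n$, so the total count is $\lesssim |\log\eps|$.

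The central obstacle is that $u_0$ may carry jumps, for which the naive Lipschitz composition bound $\|u_0 \circ g - u_0 \circ h\|_\VV \lesssim \|g - h\|_\VV$ fails. This is exactly the regime in which classical reduced bases are provably inefficient by \cref{thm:colorlb}, so handling it cleanly is precisely what drives the depth separation: the BV-plus-monotonicity argument recovers only a square root of the $L^\infty$ distance, but that loss is absorbed into the constant in front of $|\log\eps|$, keeping the overall degree-of-freedom count logarithmic as claimed.
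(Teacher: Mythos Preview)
Your proposal is correct and follows essentially the same approach as the paper: the same MATS composition $\tilde u_0 \circ (T^M)^\flat$, the same Chebyshev expansion for $T$ via \cref{lem:chebyshev}, the same bisection inverse from \cref{lem:inv}, the same appeal to the exponential $N$-width decay of $\UU_0$ for $\tilde u_0$, and the same triangle-inequality splitting. The only cosmetic difference is in the bookkeeping of the jump contribution: you make the square-root loss explicit (bounding the first two terms by $\rho_*^{-M/2}$ and $2^{-\Linv/2}$), whereas the paper records a single bound of the form $(\|u_0'\|_{L^\infty}+\|u_0\|_{TV})\|T'\|_{L^\infty}^{1/2}\|(T_{M_1}^{-1})'\|_{L^\infty}\,\rho_*^{-M_1}$ without isolating the square root on the map error; either way the exponential decay absorbs the loss and the final count is $M_1+M_2+\Linv \lesssim |\log\eps|$.
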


\begin{proof}
In this proof, we will fix $(t,\bmu)$ and denote $T = T_{(t,\bmu)}$, $T_{M_1} = T^{M_1}_{(t,\bmu)}$, $T_{M_1}^\flat = (T_{(t,\bmu)}^{M_1})^\flat$ and $T_{M_1}^{-1} = (T_{(t,\bmu)}^{M_1})^{-1}$, where $T_{(t,\bmu)}^{M_1}$ is the approximation \cref{eq:1layer-chebyshev} in $\cNs$. 

Let us choose a reduced 2-layer network $\uosl{M_2} \in \cNrs$ so that for all $\bmu \in \cD$ it satisfies,
\beq
    \Normlr{ u_0(\cdot;\bmu) - \uosl{M_2}(\cdot;\bmu) }{\VV} < \rho_*^{-M_1},
\eeq
in which $\rho_* > 1$ is the lower bound on the Bernstein radii as denoted in \cref{eq:bernstein-radius}, and $M_1$ is chosen to satisfy
\beq
    \Normlr{T - T_{M_1}}{\VV} 
    \lesssim
    \rho_*^{-M_1}.
\eeq
By \cref{as:CC}, the Kolmogorov $N$-width of $\UU_0$ decays exponentially, so $M_2$ can be chosen to satisfy $M_2 \sim M_1$.

We propose the following reduced deep network solution in $\cNmats$, 
\beq
    \urd(x) := \uosl{M_2} \circ T_{M_1}^\flat(x).
\eeq
The approximation satisfies,
\beq
    \Normlr{ u_0 \circ T^{-1} - \uosl{M_2} \circ T_{M_1}^{\flat} }{\VV}
    \le 
    \Normlr{u_0 \circ T^{-1} - u_0 \circ T_{M_1}^{\flat} }{\VV}
       + 
    \Normlr{u_0 \circ T^{\flat}_{M_1} - \uosl{M_2} \circ T_{M_1}^{\flat} }{\VV}.
\eeq
We will use the approximate inverse with $\Linv$ layers for $T_{M_1}^\flat$ (\cref{lem:inv}), for which we choose $\Linv$ sufficiently large so that $2^{-\Linv} < \rho_*^{-{M_1}}$ and $\Linv \sim M_1$. We bound the first term on the right,
\beq
    \begin{aligned}
    &\Normlr{u_0 \circ T^{-1} - u_0 \circ T_{M_1}^{\flat} }{\VV}
    \\&\le
    \Normlr{u_0 \circ T^{-1} - u_0 \circ T_{M_1}^{-1} }{\VV}
    +
    \Normlr{u_0 \circ T_{M_1}^{-1} - u_0 \circ T_{M_1}^{\flat} }{\VV}
    \\&\le
    \left(\Normlr{u_0'}{L^\infty(\Dom)} + \Normlr{u_0}{TV(\Dom)} \right)
    \Normlr{T'}{L^\infty(\Dom)}^\half
    \Normlr{(T_{M_1}^{-1})'}{L^\infty(\Dom)}
    \Normlr{T - T_{M_1}}{\VV}
    \\&\qquad + 
    \Normlr{u_0'}{L^\infty(\Dom)}
    \Normlr{T_{M_1}^{-1} - T_{M_1}^{\flat}}{\VV}
    \\&\lesssim
    \left( \Normlr{u_0'}{L^\infty(\Dom)} + \Normlr{u_0}{TV(\Dom)} \right)
    \Normlr{T'}{L^\infty(\Dom)}^\half
    \Normlr{(T_{M_1}^{-1})'}{L^\infty(\Dom)}
    \rho_*^{-{M_1}}
    +
    |\Dom|
    \Normlr{u_0'}{L^\infty(\Dom)}
    2^{-\Linv},
    \end{aligned}
\eeq
and for the second term
\beq
   \begin{aligned} 
      \Normlr{ u_0 \circ T_{M_1}^\flat  - \uosl{M_2} \circ T_{M_1}^\flat }{\VV}
      &\le 
      \Normlr{T_{M_1}'}{L^\infty(\Dom)}^\half
      \Normlr{u_0 \circ T_{M_1}^\flat \circ T_{M_1}
            - \uosl{M_2} \circ T_{M_1}^\flat \circ T_{M_1} }{\VV}
      \\&\le
      \Normlr{T_{M_1}'}{L^\infty(\Dom)}^\half
      \left(
        \Normlr{u_0  - \uosl{M_2}}{\VV} + \eps_1
      \right)
   \end{aligned} 
\eeq
in which $\eps_1 \lesssim 2^{-\Linv}$ since $T_{M_1}^\flat \circ T_{M_1}$ is a piecewise constant approximation of the identity on a grid with $2^{\Linv}$ grid-points.

Then putting it together,
\beq
    \begin{aligned}
    &\Normlr{ u_0 \circ T^{-1} - \uosl{M_2} \circ T_{M_2}^{\flat} }{\VV}
    \\
    &\lesssim 
    \left(\Normlr{u_0'}{L^\infty(\Dom)} + \Normlr{u_0}{TV(\Dom)} \right)
    \Normlr{T'}{L^\infty(\Dom)}^\half
    \Normlr{(T_{M_1}^{-1})'}{L^\infty(\Dom)}
     \rho_*^{-{M_1}}
    +
    |\Dom|
    \Normlr{u_0'}{L^\infty(\Dom)}
    2^{-\Linv}
    \\&\qquad +
    \Normlr{T_{M_1}'}{L^\infty(\Dom)}^\half
    \left(
        \rho_*^{-{M_1}} + 2^{-\Linv}
    \right)
    \lesssim \rho_*^{-{M_1}}.
    \end{aligned}
\eeq
Thus we can choose $M_1 \sim | \log \eps |$ for the error to be within the threshold $\eps$. Observe that the total number of degree of freedom in our approximation is $M \lesssim M_1+M_2 + \Linv \lesssim M_1$, so the claim is proved.
\end{proof}

\subsection{The singular and nonlinear case: the Burgers' equation}

In this section, we will show that the strategy of separately approximating the initial condition and the smooth characteristic curves, used in \cref{thm:construct} cannot apply to nonlinear problems that possess characteristic curves that are singular. However, one may still find an RDN that approximates the solution manifold with small degrees of freedom, simply by utilizing more hidden layers.

We will simplify our discussion by considering a single representative initial value problem with a particular monotonically non-increasing initial condition. The results in this subsection regarding the the Kolmogorov $N$-widths and the RDN construction for the Burgers' equation are not restricted to this simple setting, and apply to solution manifold with initial conditions in $\UU_0 \subset \UU$ satisfying \cref{as:CC} (i-iii), upon suitable localizations of the solution manifold.

\subsubsection{Shock formation and singular characteristics}

We consider the solution manifold of the Burgers' equation. This section relies on well-established facts about the equation, such as weak solutions, Rankine-Hugoniot jump conditions, shock formation, found in standard references \cite{lax72,fvmbook,evans10}. 

Consider the PDE \cref{eq:pde} with the flux function $f(u,x;\bmu) = \half u^2$ and no source term $\psi(u,x;\bmu) = 0$. We will fix the initial condition $u_0 \in \mathbb{U}$ given by
\beq
    u_0(x)
    =
    \left\{
    \begin{aligned}
        0,  \quad & x > x_0 + \gamma,
        \\
        \half - \half \sin(\frac{\pi}{2\gamma}(x - x_0)),
            \quad & \abs{x - x_0} \le \gamma,
        \\
        1,  \quad & x < x_0 - \gamma,
    \end{aligned}
    \right.
\eeq
in which we set $\gamma = 0.2$. We choose the final time $\tfin = 3$ and denote the solution manifold by
\beq
    \cM_B := 
    \left\{
        u(x,t): t \in [0,\tfin]
    \right\}.
\eeq

Let $X(t,x)$ denote the characteristic curves for the solution in $\cM_B$. The time shock appears is the smallest time $t_1 \in [0,\tfin]$ when $\partial_x X > 0$ fails to hold: this is when the following equation has multiple roots in $t$,
\beq
    \frac{1}{t} + u_0'(x) 
    = 
    \frac{1}{t} - \frac{\pi}{4\gamma} \cos( \frac{\pi}{2\gamma}(x - x_0)) 
    =
    0
\eeq
so $t_1 = 4\gamma / \pi$. At a later time, a shock of unit height forms, then the solution becomes a single jump that propagates at constant speed. We will denote by $t_2 \in (t_1,\tfin)$ the time at which the shock formation is complete. 

Denote by $x_S(t)$ the shock location derived from the Rankine-Hugoniot jump condition. Let us denote,
\beq
    \Gamma(t)
    :=
    \left\{
        x \in \RR
        \,|\,
        x_S(t) = \check{X}(t;x)
    \right\},
    \quad
    I(t) := (\inf \Gamma(t), \sup \Gamma(t))
\eeq
where we define $I(t)$ to be empty before shock formation $(t < t_1)$. Using this notation, the characteristics for the weak solution is given by 
\beq
    X(t,x)
    :=
    \begin{cases}
        x + u_0(x) t, \quad & x \notin    I(t), \\
        x_S(t),               & x \in I(t).
    \end{cases}
\eeq
So $X(t, \cdot)$ is continuous, piecewise analytic and is continuous in $t$, and satisfies $\partial_x X \ge 0$. 

To show that $\cM_B$ is $\half$-sharply convective, one simply observes that the solution is a traveling jump function when $t > t_2$, which reduces the problem to the case of $\cM_A$ (\cref{lem:OR}) up to scaling. Now, we show that the collection of these characteristic curves $X(t,\cdot)$ themselves form a sharply convective class, in contrast to the regular and linear case in  \cref{sec:color}.

\begin{figure}
    \centering
    \includegraphics[width=0.8\textwidth]{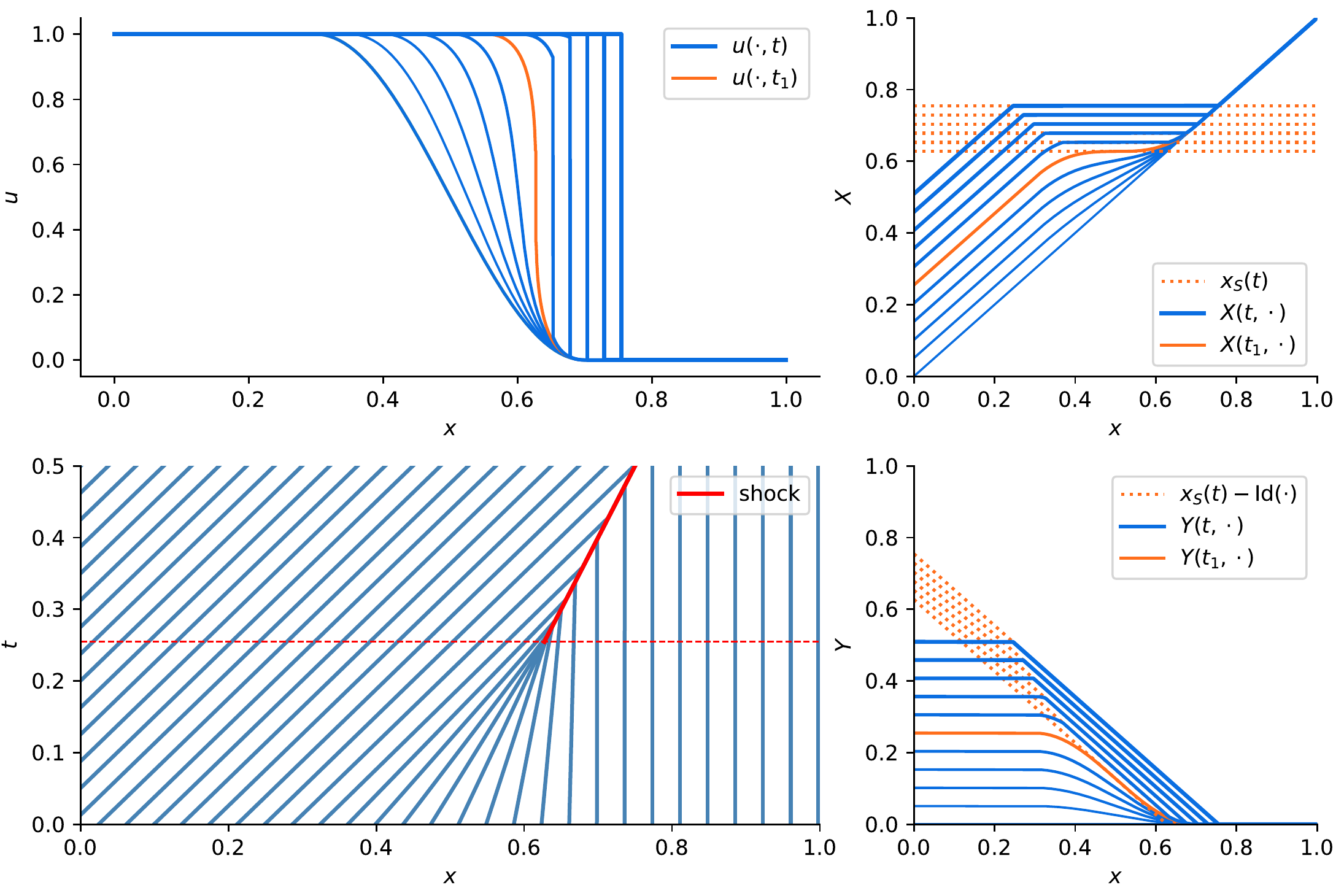}
    \caption{Burgers solution (top left) and the evolution of its transport map $X(t,\cdot)$ (top right), characteristic curves (bottom left), and the maps $Y(t,\cdot) := X(t,\cdot) - \Id(\cdot)$ (bottom right).}
    \label{fig:burgers}
\end{figure}

\begin{theorem}\label{lem:CB1}
$\cM_B^{X} := \{X(t,x): t \in (0,\tfin)\}$ has $d(N;\cM_B^{X}) \gtrsim N^{-\frac{3}{2}}$.
\end{theorem}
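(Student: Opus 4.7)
The plan is to show that the solution-map manifold $\cM_B^X$ is $(\tfrac32,1)$-sharply convective in the sense of the sharply convective class defined above, after which the second part of \cref{lem:sharp} yields $d(N;\cM_B^X)\gtrsim N^{-3/2}$.

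First I would restrict to a time window $(a,b)\subset(t_2,\tfin)$ late enough that $x_1(t)<x_0-\gamma$ and $x_2(t)>x_0+\gamma$ on $(a,b)$; then $u_0\equiv 1$ to the left of $I(t)$ and $u_0\equiv 0$ to the right, the Rankine--Hugoniot shock speed is the constant $v_S=\tfrac12$, and the plateau endpoints satisfy $x_1(t)=x_S(t)-t$, $x_2(t)=x_S(t)$ with $x_1'\equiv-\tfrac12$, $x_2'\equiv+\tfrac12$. Consequently $X(t,x)=x+u_0(x)\,t$ for $x\notin I(t)$ and $X(t,x)=x_S(a)+\tfrac12(t-a)$ for $x\in I(t)$, so for each fixed $x\in(x_1(b),x_1(a))\cup(x_2(a),x_2(b))$ the map $t\mapsto X(t,x)$ is piecewise linear in $t$ with a unique kink at the transition time $t^*(x)$ where $x$ enters $I(t)$, the slope jumping from $u_0(x)$ to $\tfrac12$ (continuity at $t^*(x)$ being enforced by the definition of $x_1,x_2$).

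Given $N\in\NN$, I would pick $2N$ equispaced times $\tau_n\in(a,b)$ and apply to each the order-$2$ one-sided stencil $\tau_{nk}:=\tau_n+(k-1)\Delta t$, $k=1,2,3$, with Vandermonde weights $(b_1,b_2,b_3)=(1,-2,1)$ and $\Delta t=c/N$ small enough that the influence sets $S_n:=\{x:t^*(x)\in(\tau_n,\tau_n+2\Delta t)\}$ are pairwise disjoint (possible because $|x_1'|=|x_2'|=\tfrac12$ gives $|S_n|\sim\Delta t$, while the $\tau_n$ are spaced by $\sim 1/N\gg\Delta t$). Setting $u_{nk}:=X(\tau_{nk},\cdot)\in\cM_B^X$ and $\phi_n:=\sum_{k=1}^{3}b_k u_{nk}$, a direct computation that splits each $X(\tau_{nk},x)$ according to whether $\tau_{nk}<t^*(x)$ or $\tau_{nk}>t^*(x)$, and uses continuity at $t^*(x)$ together with the Vandermonde relations $\sum_k b_k=\sum_k(k-1)b_k=0$, yields the closed form
\[
\phi_n(x)\;=\;\bigl(\tfrac12-u_0(x)\bigr)\,G\bigl(t^*(x)-\tau_n\bigr)\quad\text{for }x\in S_n,\qquad \phi_n(x)=0\text{ otherwise,}
\]
where $G(\tau)=\min(\tau,\,2\Delta t-\tau)$ is the tent function of base $[0,2\Delta t]$ and height $\Delta t$.

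Since $|\tfrac12-u_0|=\tfrac12$ on both branches of $S_n$ (as $u_0\equiv 1$ near $x_1$ and $u_0\equiv 0$ near $x_2$), an explicit change of variables $\tau = t^*(x)-\tau_n$ using $|dx/d\tau|=2$ gives $\|\phi_n\|_\VV^2\sim\Delta t^3\sim N^{-3}$, hence $\|\phi_n\|_\VV\sim N^{-3/2}$. By disjointness of the $S_n$, $\{\phi_n\}_{n=1}^{2N}$ is pairwise orthogonal in $\VV$; writing $\phi_n=\sum_k a_{nk}u_{nk}$ with $a_{nk}=b_k$ for $k\leq 3$ and zero otherwise exhibits $B_{2N}=\{\phi_n\}_{n=1}^{2N}$ as a $2N$-ball generated by $\cM_B^X$ with $A_{N,1}\leq|b_1|+|b_2|+|b_3|=4\lesssim 1$. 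The orthogonal family $B_{2N}$ trivially generates itself as an orthogonal $2N$-ball $B'_{2N}=B_{2N}$ with $A'_{N,1}=1$ and $\|\phi_n\|_\VV\gtrsim N^{-3/2}$, so $\cM_B^X$ is $(\tfrac32,1)$-sharply convective and \cref{lem:sharp} concludes $d(N;\cM_B^X)\gtrsim N^{-3/2}$.

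The main obstacle is the kink computation yielding the closed tent-function form for $\phi_n$: the Vandermonde cancellations must kill the linear-in-$t$ contributions on each side of the kink and leave only the explicit piecewise-linear remainder, and the shock speed $v_S=\tfrac12$ together with the front speeds $|x_1'|=|x_2'|=\tfrac12$ must be exactly constant on $(a,b)$ so that $X(\cdot,x)$ is honestly piecewise \emph{linear} in $t$; otherwise an $O((\Delta t)^{s_1})$ smooth remainder would need to be carefully tracked to maintain the sharp matching lower bound $\|\phi_n\|_\VV\gtrsim N^{-3/2}$.
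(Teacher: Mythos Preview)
Your proposal is correct and follows essentially the same route as the paper's own proof in the regime $t\in(t_2,\tfin)$: apply the second-difference stencil $(1,-2,1)$ (the paper uses $(-1,2,-1)$) to $X(\tau_{nk},\cdot)$ to obtain disjointly supported $\phi_n$ with $\|\phi_n\|_\VV\sim N^{-3/2}$, then invoke \cref{lem:sharp}. Two inconsequential slips: the Jacobian is $|dx/d\tau|=\tfrac12$ rather than $2$, and the $\tau_n$-spacing $\sim 1/N$ is comparable to $\Delta t=c/N$, not $\gg$; neither affects the scaling, and your explicit tent-function formula for $\phi_n$ is in fact more detailed than what the paper records.
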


\begin{proof}
Denote $\Dom_S := \{x \in \Dom: x > \inf I(t_S)\}$, and let $Y(t,\cdot) := X(t,\cdot) - X(0,\cdot)$. We will consider the two time intervals $(t_1,t_2)$ and $(t_2,t_F)$ separately. While it is sufficent to prove the lower bound for either one of these intervals, we will provide a proof for both cases.

(\emph{The case $t \in (t_1,t_2)$}) $Y(t,\cdot)|_{\Dom_S}$ is piecewise analytic with separate pieces in $\Dom_S \cap I(t)$ and $\Dom_S \cap [\sup I(t), 1)$. It is $Y(t,\cdot) = x_S(t) - x$ in the former where it is linear, whereas $Y(t,\cdot) = u_0(\cdot)t$ in the latter where it is analytic. At the point $\sup I(t)$, $\p_x Y$ has a jump. Hence, we apply the arguments of the proof of \cref{thm:colorlb} with minor changes. Applying the finite difference stencil $( b_{k} )_{k=1}^K$ over the equidistant grid $\tau_{nk}$ near $\tau_n$ \cref{eq:taunk} for sufficiently large $K$ and $s_1 = 2$ in \cref{eq:vandermonde}, we take $\phi_n(x) := \sum_{k=1}^K b_k Y(\tau_{nk},x)$ for $n = 1, ... , 2N$. Then  $\abs{\phi_n} \sim N^{-1}$ in the neighborhood $S_n$ of $X(t_n,x_0)$ of measure $\sim N^{-1}$, with $\{S_n\}_{n=1}^{2N}$ mutually disjoint, and one has $(\phi_n,\phi_m)_{\Dom_1 \setminus (S_n \cup S_m)} \lesssim (\Delta t)^{2 s_1}$.  Using the first part of \cref{lem:sharp} and a Gram-Schmidt process, we obtain an orthogonal $\{\psi_n\}_{n=1}^{2N}$ with $\Norm{\psi_n}{L^2(\Dom_1)} \gtrsim N^{-\frac{3}{2}}$. Therefore, we obtain the result by the second part of \cref{lem:sharp}. 

(\emph{Case $t \in (t_2,t_F)$}) We have that $Y(t,\cdot)$ is linear and supported in $I(t)$, is zero for $x > I(t)$, from which a direct argument follows. We choose $\{\tau_{nk}\}_{k=1}^K$, with which we can construct for $n = 1, ... , 2N$, $\phi_n(x) := \sum_{k=1}^K a_{nk} Y(\tau_{nk},x)$ so that the set $S_n := \supp \phi_n \cap \Dom_S$ with $|S_n| \sim N^{-1}$ is pairwise disjoint in $n$. In particular, we may take $(a_{n1}, a_{n2}, a_{n3}) = (-1,2,-1)$ with $(\tau_{n1},\tau_{n2},\tau_{n3})$ chosen so that
\beq
    x_S(\tau_{nk}) = x_S(t_S) + (3(n-1) + (k-1))\nu, 
    \quad
    \nu := \frac{|\Dom_S|}{6N}
    \quad
    k = 1,2,3.
\eeq
Then $\Norm{\phi_n}{L^2(\Dom_1)} \gtrsim  N^{-\frac{3}{2}}$ and $\phi_n$ has support in $S_n = (x_S(\tau_{n1}),x_S(\tau_{n3}))$, and therefore pairwise disjoint. Therefore $\cM_B^X$ is $\frac{3}{2}$-sharply convective, and the result follows by \cref{lem:sharp}.
\end{proof}

\subsubsection{A deep RDN approximation of singular characteristics}

Now, we construct an efficient RDN approximation of the solution manifold $\cM_B$ exhibiting singular characteristics. The construction is obtained by using more layers.

\begin{theorem}
For any given error threshold $\eps \in (0,1)$, there exists an reduced deep network solution manifold $\cMrd \subset \cNrmats$ with total degrees of freedom  $M \lesssim \abs{\log \eps}$.
\end{theorem}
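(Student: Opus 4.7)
The plan is to adapt the construction of Theorem~\ref{thm:construct} to the singular characteristic map $X(t,\cdot)$. Although $X(t,\cdot)$ is not globally analytic once the shock forms, it is monotone non-decreasing and piecewise analytic: on the pre-shock side $[0,\inf I(t)]$ and the post-shock side $[\sup I(t),1]$ it coincides with the smooth map $Z(t,x):=x+u_0(x)t$, while on the plateau $I(t)$ it equals the constant $x_S(t)$ (with $I(t)=\emptyset$ for $t<t_1$). The key idea is to approximate each analytic branch of $X(t,\cdot)$ by a 2-layer Chebyshev-based reduced network as in Lemma~\ref{lem:chebyshev}, and to glue the branches together using $O(1)$ additional ReLU layers; the inverse is then handled by Lemma~\ref{lem:inv}, which already supports flat plateaus.

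Concretely, I would first use \cref{eq:1layer-chebyshev} to build reduced 2-layer approximations $\tilde{X}^L_M,\tilde{X}^R_M\in\cNrs$ of the two smooth branches of $X(t,\cdot)$, reparametrized to start at the plateau endpoints, each with $M\sim|\log\eps|$ Chebyshev modes and uniform (value and derivative) error $\lesssim\rho_*^{-M}$. I would then assemble them into a single monotone network $\tilde{X}_M(t,\cdot)\in\cNrd_\sharp$ of the form
\[
    \tilde{X}_M(t,x)
    \;:=\;
    x_S(t)
    + \tilde{X}^R_M\!\bigl(t,\sigma(x-\sup I(t))\bigr)
    - \tilde{X}^L_M\!\bigl(t,\sigma(\inf I(t)-x)\bigr),
\]
with the convention $\tilde{X}^R_M(t,0)=\tilde{X}^L_M(t,0)=0$, so that the two ReLU activations clamp $\tilde{X}_M$ to the plateau $x_S(t)$ on $I(t)$ and reproduce the smooth branches outside; monotonicity in $x$ is inherited from the monotonicity of each branch. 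This assembly adds only $O(1)$ layers and $O(1)$ parameters (the two endpoints of $I(t)$ and the shock value $x_S(t)$, all known from the Rankine--Hugoniot condition), so the total degrees of freedom remain $O(|\log\eps|)$.

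Next, I would apply Lemma~\ref{lem:inv} with $\Linv\sim|\log\eps|$ to obtain an approximate inverse $\tilde{X}_M^\flat\in\cNrd_\flat$. Because $\tilde{X}_M$ is flat on $I(t)$, its inverse has a jump of magnitude $|I(t)|$ at $x_S(t)$; by the remark following Lemma~\ref{lem:inv}, the bisection construction converges to this jump location, which is precisely the weak solution's Rankine--Hugoniot shock. Approximating $u_0$ by $\underline{u}_0\in\cNrs$ using a piecewise Chebyshev construction on each of the at most three analytic pieces of $u_0$, I would then define
\[
    \urd(x;t) \;:=\; \underline{u}_0\circ\tilde{X}_M^\flat(x) \;\in\; \cNrmats,
\]
and bound the error by splitting $u-\urd$ into contributions from the $u_0$-approximation, the inverse approximation, and the $X$-approximation, exactly as in the proof of Theorem~\ref{thm:construct}, using Assumption~\ref{as:CC}(iii) for the total variation and $L^\infty$ control of $u_0$ and its derivative.

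The main technical obstacle I expect is near the endpoints of $I(t)$, where the ReLU hinge of $\tilde{X}_M$ meets the jump produced by the bisection in $\tilde{X}_M^\flat$. One must verify that the hinge locations in $\tilde{X}_M$ agree with the true endpoints of $I(t)$ to within $O(\rho_*^{-M})$, that the jump of $\tilde{X}_M^\flat$ and the jump in $u_0$ propagated to $x_S(t)$ align to within the same accuracy, and that the clamping does not destroy the uniform lower bound on $\tilde{X}_M'$ away from the plateau needed to apply Lemma~\ref{lem:inv} and to bound $(\tilde{X}_M^\flat)'$. Provided these local alignments hold, the $L^2$ error between $u_0\circ X^{-1}$ and $\underline{u}_0\circ\tilde{X}_M^\flat$ stays $\lesssim\rho_*^{-M}$, so choosing $M\sim|\log\eps|$ yields the claimed estimate with total degrees of freedom $\lesssim|\log\eps|$.
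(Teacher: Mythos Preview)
Your proposal imports the color-equation template (Theorem~\ref{thm:construct}): Chebyshev-approximate the characteristic map on each side of the plateau, invert, compose with $u_0$. There is a genuine gap at the first step. The reparametrized right branch is, up to an affine part, $s\mapsto u_0(\sup I(t)+s)\,t$. The specific $u_0$ is only $C^1$ (its second derivative jumps at $x_0\pm\gamma$), so this function has a $C^1$ singularity at $s=x_0+\gamma-\sup I(t)$, and that location \emph{moves} with $t$ during shock formation $t_1<t<t_2$. By exactly the mechanism of Theorem~\ref{thm:colorlb} (case $s=1$), the family $\{s\mapsto u_0(\sup I(t)+s)\}_t$ is $\tfrac32$-sharply convective and hence has Kolmogorov $N$-width $\gtrsim N^{-3/2}$; this is essentially what Theorem~\ref{lem:CB1} establishes for $\cM_B^X$. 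Consequently there is no $\tilde X^R_M\in\cNrs$ with fixed reduced activations and $M\sim|\log\eps|$ coefficients that captures all the right branches, and your assembly $\tilde X_M$ cannot be an RDN with the claimed degrees of freedom. Further subdividing at $x_0\pm\gamma$ might rescue the argument, but then the pieces change topology as $\sup I(t)$ crosses $x_0+\gamma$, and you have not addressed this.

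The paper avoids reparametrization entirely by exploiting two facts specific to Burgers. First, the smooth characteristic map has the exact rank-two form $T_{12}(x;t)=x+t\,u_0(x)$, so no Chebyshev step is needed: a single free weight $w_{12}(t)=t$ suffices once $u_0$ is fixed. Second, the plateau is created and undone by composing with threshold maps $T_{11},T_2$ of the form $x\mapsto x+|I(t)|\,\varsigma(x-x_S(t))$, each carrying only two free weights. Precomposing with $T_{11}$ excises the interval $I(t)$ so that $T_1:=T_{12}\circ T_{11}$ is monotone; postcomposing the inverse with $T_2$ reinstates the jump. This yields the exact identity $u=u_0\circ T_2\circ T_1^{-1}$ with $O(1)$ free weights, and the only remaining approximations are $u_0\approx\uos$ and the bisection inverse $\Tls^\flat$ with $\Linv\sim|\log\eps|$ layers.
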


\begin{proof}
Let us define,
\beq
    \begin{aligned}
        T_{11}(x;t) &:= 
            x + w_{111}(t) \varsigma(x + w_{112}(t)),
        &
        T_1(x;t) &:= T_{12} \circ T_{11} (x;t),\\
        T_{12}(x;t) &:= x + w_{12}(t) u_0(x),
        &
        T_2(x;t) &:= x + w_{21}(t) \varsigma(x + w_{22}(t)).
    \end{aligned}
    \label{eq:RDN-burgers}
\eeq
For $t \in [0,\tfin]$, we choose the weights
\beq
    w_{12}(t) = t,
    \quad
    w_{111}(t) = w_{21}(t) = |I(t)|,
    \quad
    w_{112}(t) = w_{22}(t) = - x_S(t).
    \label{eq:wgts-burgers}
\eeq
Note that $T_{11}', T_2' \ge 0$. Let us construct,
\beq
    \Tlzs(x;t) := x + w_{12}(t) \uos(x),
    \quad
    \Tls(x;t) := \Tlzs \circ T_{11} (x;t),
    \label{eq:RDN-burgers1}
\eeq
in which $\Tlzs$ is an approximation to $T_{12}$, obtained by approximating $u_0 \in \UU$ by $\uos \in \cNs \cap \cNd_\sharp$, satisfying
\beq
    \Normlr{ T_1^{-1} - \Tls^{-1}}{L^\infty(\Dom)} 
    =
    \Normlr{ (T_{12} \circ T_{11} )^{-1} -( \Tlzs \circ T_{11} )^{-1}}{L^\infty(\Dom)} 
    < 
    \eps_1.
\eeq

Observe the true solution is $u = u_0 \circ T_2 \circ T_1^{-1}$ with the weights \cref{eq:wgts-burgers}. We will now show that $\urd \in \cNmats$ of the form
\beq
    \urd 
    := 
    \uos \circ T_2 \circ \Tls^\flat 
    = 
    \uos \circ T_2 \circ (\Tlzs \circ T_{11})^\flat 
\eeq
satisfies
\beq
    \Normlr{u_0 \circ T_2 \circ T_1^{-1}
          -
          \uos \circ T_2 \circ \Tls^\flat
         }{\VV}
    < \eps.
\eeq
Next, we have
\beq
    \begin{aligned}
    &\Normlr{u_0 \circ T_2 \circ T_1^{-1}
           -
           \uos \circ T_2 \circ \Tls^{\flat}
          }{\VV}
    \\&\quad \le 
    \Normlr{
          u_0 \circ T_2 \circ T_1^{-1}
          -
          u_0 \circ T_2 \circ \Tls^{\flat}
         }{\VV}
    + 
    \Normlr{
          u_0 \circ T_2 \circ \Tls^{\flat}
          -
          \uos \circ T_2 \circ \Tls^{\flat}
         }{\VV}
    \\&\quad \le 
    \Normlr{u'_0}{L^\infty(\Dom)}
    \Normlr{ T_2 \circ T_1^{-1} - T_2 \circ \Tls^{\flat}}{\VV}
    \\ & \qquad + 
    \left( \Norm{u'_0}{L^\infty(\Dom)} + \Norm{\uos'}{L^\infty(\Dom)} \right)
    \abs{\Dom} 2^{-\Linv}
    +
    \Norm{ (T_2 \circ \Tls^{-1})' }{L^\infty(\Dom)}^\half
    \Normlr{ u - \uos}{\VV}  .
    \end{aligned}
\eeq
In the first term, due to $T_2$ being an identity almost everywhere,
\beq
    \Normlr{ T_2 \circ T_1^{-1} - T_2 \circ \Tls^{\flat}}{\VV}
    \le
    \left(\abs{\Dom} + \abs{I(t)} \right) 
    \Normlr{T_1^{-1} - \Tls^{\flat}}{L^\infty(\Dom)},
\eeq
in which we have,
\beq
    \Normlr{ T_1^{-1} - \Tls^{-1}  }{L^\infty(\Dom)}
    +
    \Normlr{ T_1^{-1} - \Tls^\flat }{L^\infty(\Dom)}
    <
    \eps_1
    +
    |\Dom| 2^{-\Linv}.
\eeq
In the second term, we may suppose $\Normlr{u_0 - \bar{u}_0}{\VV} < \eps_1$, so we have for $\Linv = \log \eps_1$ and $\eps_1 \sim \eps$ small enough
\beq
    \Normlr{u_0 \circ T_2 \circ T_1^{-1}
           -
           \uos \circ T_2 \circ \Tls^{\flat}
          }{\VV} < \eps.
\eeq
Let $M$ denote the total degrees of freedom. Then counting the number of weights in \cref{eq:RDN-burgers} as well as the number of inversion layers, we have $M \lesssim \Linv \lesssim |\log \eps|$. 
\end{proof}

\section*{Acknowledgements} The work of the first author (Rim) and fourth author (Peherstorfer) was partially supported by the Air Force Center of Excellence on Multi-Fidelity Modeling of Rocket Combustor Dynamics under Award Number FA9550-17-1-0195 and AFOSR MURI on multi-information sources of multi-physics systems under Award Number FA9550-15-1-0038 (Program Manager Dr.~Fariba Fahroo). The fourth author (Peherstorfer) was additionally partially supported by the National Science Foundation under Grant No.~1901091. The work of the second author (Venturi) and the third author (Bruna) was partially supported by the \mbox{Alfred} P. Sloan Foundation, NSF RI-1816753, NSF CAREER CIF 1845360, NSF \mbox{CHS-1901091}, Samsung Electronics, and the Institute for Advanced Study. 

The first author (Rim) thanks Gerrit Welper and Weilin Li for fruitful discussions.

\appendix

\section{Proof of \cref{lem:inv}}\label{sec:proof:lem:inv}

We provide an explicit construction that implements the bisection method. Given the neural network $f$, we first construct a neural network $g_f$ with input and output 
\[
    g_f([a,b,x]) = [a',b',x],
\]
supposing that $\Dom = (a,b)$ and $x \in f(\Dom)$. First $L+1$ layers are given by
\[
    \begin{bmatrix}
    \Id \\
    \Id \\
    \Id \\
    f 
    \end{bmatrix}
    \odot
    \begin{bmatrix} 
        1 & 0 & 0 \\
        0 & 1 & 0 \\
        0 & 0 & 1 \\ 
        \half & \half & 0 \\ 
    \end{bmatrix}
    \begin{bmatrix} a \\ b \\ x \end{bmatrix}
    =
    \begin{bmatrix} a \\ b \\ x \\ f(\frac{a+b}{2}) \end{bmatrix},
\]
in which $f$ appears as an activation for ease of exposition, although it actually is itself a network with $L$ layers.  Layer $L+2$ is given by
\[
    \begin{bmatrix}
    \Id \\ \Id \\ \Id \\ \varsigma
    \end{bmatrix}
    \odot
    \begin{bmatrix}
    1 & 0 & 0 & 0 \\
    0 & 1 & 0 & 0 \\
    0 & 0 & 1 & 0 \\
    0 & 0 &-1 & 1 
    \end{bmatrix}
    \begin{bmatrix}
    a \\ b \\ x \\ \tau
    \end{bmatrix}
    =
    \begin{bmatrix}
    a \\ b \\ x \\ \varsigma(\tau - x)
    \end{bmatrix},
\]
layer $L+3$ is given by ($c := b-a = |\Dom|$)
\[
    \begin{bmatrix}
    \Id \\ \Id \\ \Id \\ \sigma \\ \sigma
    \end{bmatrix}
    \odot
    \begin{bmatrix}
    1 & 0 & 0 & 0 \\
    0 & 1 & 0 & 0 \\
    0 & 0 & 1 & 0 \\
    -\half & \half & 0 & -c \\
    -\half & \half & 0 & c 
    \end{bmatrix}
    \begin{bmatrix}
    a \\ b \\ x \\ w
    \end{bmatrix}
    +
    \begin{bmatrix}
    0 \\ 0 \\ 0 \\ 0 \\ -c 
    \end{bmatrix}
    =
    \begin{bmatrix}
    a \\ b \\ x \\ 
    \sigma(-c \gamma    + \frac{b-a}{2})\\ 
    \sigma( c(\gamma-1) + \frac{b-a}{2})
    \end{bmatrix},
\]
and layer $L+4$
\[
    \begin{bmatrix}
    \Id \\ \Id \\ \Id \\ \Id \\ \Id
    \end{bmatrix}
    \odot
    \begin{bmatrix}
    1 & 0 & 0 & 1 &  0 \\
    1 & 0 & 0 & 0 & -1 \\
    0 & 0 & 1 & 0 &  0 \\
    \end{bmatrix}
    \begin{bmatrix}
    a \\ b \\ x \\ s_1 \\ s_2
    \end{bmatrix}
    =
    \begin{bmatrix}
    a + s_1 \\ a - s_2 \\ x
    \end{bmatrix}.
\]
Thusly defined $g_f$ has $L+4$ layers. Then $\Linv$ compositions of $g_f$ 
\[
    \underbrace{g_f \circ g_f \circ ... \circ g_f}_{\Linv \text{ times}}
    =
    [a,b,x]^T
\]
outputs the values $a,b$ whose distance to $f^{-1}(x)$ is less than $|\Dom| 2^{-\Linv}$. Taking the mid-point,
\[
    \begin{aligned}
    \mathring{g}_f(x)
    &:=
    \left[ \half, \half, 0 \right]^T (g \circ g \circ ... \circ g)
    ([1,0,0]^T(x) + [0,a,b])
    = 
    \half (a + b)
    = 
    y_*,
    \end{aligned}
\]
for which it holds $|f^{-1}(x) - y_*| \le |\Dom| 2^{-L}$.  Let $f^{\flat}(x) := \mathring{g}_f(x)$.

\section{Proof of \cref{lem:analytic}}\label{sec:proof:lem:analytic}

Let 
\[
    x_1 := X(t_1,x_0) = x_0 + \int_0^{t_1} c(X(\tau, x_0)) \dtau.
\]
Then the partial derivative of $X$ with respect to $x$ is,
\[
    \begin{aligned}
    \lim_{x_1 \to x_0} &
    \frac{ X(t,x_1) - X(t,x_0)}
         { x_1 - x_0}
    \\&=
    \lim_{x_1 \to x_0}
    \frac{ (x_1 + \int_0^t c(X(\tau, x_1)) \dtau)
         - (x_0 + \int_0^t c(X(\tau, x_0)) \dtau)}
         {x_1 - x_0}
    \\&=
    \lim_{x_1 \to x_0}
    \frac{    \int_0^{t_1} c(X(\tau,x_0)) \dtau 
            + \int_0^t c(X(\tau,x_1)) \dtau 
            - \int_0^t c(X(\tau,x_0)) \dtau 
         }
         {x_1 - x_0}
    \\&=
    \lim_{x_1 \to x_0}
    \frac{  \int_0^{t+t_1} c(X(\tau,x_1)) \dtau 
          - \int_0^t       c(X(\tau,x_0)) \dtau 
         }
         {x_1 - x_0}
    \\&=
    \left(
    \lim_{t_1 \to t_0}
    \frac{  \int_t^{t_1} c(X(\tau,x_0)) \dtau }
         {t_1 - t_0}
    \right)
    \cdot
    \left(
    \lim_{x_1 \to x_0}
    \frac{t_1 - t_0}
         {x_1 - x_0}
    \right)
    =
    \frac{c(X(t,x_0))}{c(x_0)}.
    \end{aligned}
\]
By the semi-group property of the solution $X(t_1 + t, x_0) = X(t, x_1)$, so
that
\[
    \int_0^t c(X(\tau,x_1)) \dtau = \int_{t_1}^{t+ t_1} c(X(\tau, x_0)) \dtau.
\]
So $X(t,\cdot)$ is the solution to the ODE
\[
    \left\{
    \begin{aligned}
    \partial_x X(t,x) &= \tilde{c}(X,x) := \frac{c(X(t,x))}{c(x)},
    \\
    X(t,x_0) &= x_0.
    \end{aligned}
    \right.
\]
Since $c > c_0 > 0$, $\tilde{c}$ is also analytic.

\bibliographystyle{siamplain}

\end{document}